\newtheorem{theorem}{Theorem}[section]
\newtheorem{cor}[theorem]{Corollary}
\newtheorem{lem}[theorem]{Lemma}
\newtheorem{prop}[theorem]{Proposition}
\newtheorem{definition}{Definition}
\newtheorem{ex}{Example}[section]
\def \Zl {{\mathbb Z}}
\def \Nl {{\mathbb N}}
\def \Rl {{\mathbb R}}
\title{ Perfect State Transfer on NEPS of the path $P_{3}$}
\author{ Hiranmoy Pal\\
Department of Mathematics\\
Indian Institute of Technology Guwahati\\
Guwahati, India - 781039\\
Email: hiranmoy@iitg.ernet.in\\
\\
Bikash Bhattacharjya\\
Department of Mathematics\\
Indian Institute of Technology Guwahati\\
Guwahati, India - 781039\\
Email: b.bikash@iitg.ernet.in
}
\begin{document}
\maketitle

\vspace{-0.3in}

\begin{center}{Abstract}\end{center}

Perfect state transfer is significant in quantum communication networks. There are very few graphs having this property. So, it is useful to find some new graphs having perfect state transfer. A good way to construct new graphs is by forming NEPS. It is known that the graph $P_{3}$ exhibits perfect state transfer and so we investigate some NEPS of the path $P_{3}$. A sufficient condition is found for a NEPS of $P_{3}$ to have perfect state transfer. Using these NEPS, some other graphs are constructed having perfect state transfer. We also prove that for every $n\in \Nl\setminus \left\lbrace 1\right\rbrace$ and any odd positive integer $k< n$, there is a basis $\Omega$ such that $NEPS(P_{3},\ldots, P_{3};\Omega)$ is connected and exhibits perfect state transfer.

\noindent {\textbf{Keywords}: Perfect state transfer, NEPS of graphs.} 
 
\section{Introduction}
Perfect state transfer is highly desirable in quantum-communication networks modelled by a graph with adjacency matrix as the Hamiltonian of the system. The property of perfect state transfer on quantum networks was originally introduced by S. Bose \cite{bose}. The main goal is to find graphs having perfect state transfer. Christandl et al. \cite{chr1,chr2} shown that Cartesian products of the path $P_{2}$ and $P_{3}$ exhibit perfect state transfer. Again Bernasconi et al. \cite{ber} generalized the result of Christandl et al. \cite{chr1,chr2} for the graph $P_{2}$ and shown that the cubelike Cayley graphs $X\left(\Zl^{n}_{2}, \Omega\right)$, which are actually $NEPS\left(P_{2},\ldots P_{2}, \Omega\right)$, admit perfect state transfer whenever $\sum\limits_{w\in\Omega}w\neq \mathbf{0}$ in $\Zl_{2}^{n}$. So the natural question is to find whether $NEPS\left(P_{3},\ldots P_{3}, \Omega\right)$ admits perfect state transfer or not. This problem of finding perfect state transfer in NEPS of the path $P_{3}$ was asked by Dragan Stevanovi\'c in \cite{stev}. We show that some restrictions on the basis set $\Omega$ yields perfect state transfer on $NEPS\left(P_{3},\ldots P_{3}, \Omega\right)$. Although the NEPS may not always be connected for every basis $\Omega$, but with an additional condition we find that there are some NEPS of $P_{3}$ which are connected and exhibits perfect state transfer. This generalizes the result of Christandl et al. \cite{chr1,chr2} for the graph $P_{3}$.

\section{Preliminaries}
\subsection{Perfect state transfer on graphs}
Throughout the paper we only consider simple graphs. The transition matrix $H_{A}(t)$ for a graph $G$ with adjacency matrix $A$ is defined by $H_{A}(t):=\exp{(-itA)}=\sum\limits_{k\geq 0}(-i)^{k}A^{k}\frac{t^{k}}{k!}$. 
The graph $G$ is said to admit perfect state transfer from a vertex $u$ to another vertex $v$ at time $\tau\in\Rl$ if $\mid e_{u}^{T}H_{A}(\tau)e_{v}\mid=1$, \emph{i.e.} if the $uv$-th entry of $H_{A}(\tau)$ has unit modulus. Note that $e_{u}$ denotes the unit vector of appropriate size with $u$-th entry $1$. If $u=v$ then we say that the graph is periodic at the vertex $u$. A graph is said to be periodic if it is periodic at each of its vertices. We illustrate this by the following example.\par
Consider the graph $P_{2}$, the path on two vertices. The path $P_{2}$ has the adjacency matrix
\begin{eqnarray*}
      A=\left(\begin{array}{rr}
      0 & 1 \\
      1 & 0 \end{array} \right)
      \end{eqnarray*}
and $A^{2}=I$, the identity matrix. The transition matrix is therefore $H_{A}(t)=\cos(t)I-i\sin(t)A$. This implies that $P_{2}$ has perfect state transfer at $t=\frac{\pi}{2}$ and the graph is periodic at $t=\pi$.\par
Spectral decomposition can be used to find the transition matrix efficiently. Let $\lambda_{1},\ldots, \lambda_{m}$ be the distinct eigenvalues of $A$ and the projections (idempotents) onto the corresponding eigenspaces be $E_{1},\ldots, E_{m}$. The spectral decomposition of $A$ is therefore $A=\sum\limits_{r=1}^{m}\lambda_{r}E_{r}$. Note that $E_{1}+\ldots+E_{m}=I$, $E_{r}^{2}=E_{r}$ and $E_{r}E_{s}=0$ for $r\neq s$, $1\leq r,s\leq m$. As the exponential function is defined on the eigenvalues of $A$, we have the transition matrix as follows $H_{A}(t)=\sum\limits_{r=1}^{m}\exp{(-it\lambda_{r})}E_{r}$. By Lagrange interpolation, there is a polynomial $p(x)$ of degree at most $m-1$ such that $p(\lambda_{r})=\exp{\left(-it\lambda_{r}\right)}$, $1\leq r\leq m$. This implies $p\left(A\right)=\sum\limits_{r=1}^{m}p\left(\lambda_{r}\right) E_{r}=\sum\limits_{r=1}^{m}\exp{(-it\lambda_{r})}E_{r}=H_{A}(t)$. So $H_{A}(t)$ is a polynomial in $A$ and hence $H_{A}(t)$ is symmetric. Also the transition matrix $H_{A}(t)$ is unitary as $H_{A}(t)\left(H_{A}(t)\right)^{*}=H_{A}(t)\overline{H_{A}(t)}=I$. In the following example the transition matrix has been calculated with the help of spectral decomposition.
\par Consider the graph $P_{3}$, a path of length two with three vertices $1,2$ and $3$, where both the vertices $1$ and $3$ are adjacent to the vertex $2$. The adjacency matrix of $P_{3}$ with respect to the usual ordering of vertices is given by
\begin{eqnarray*}
      A=\left(\begin{array}{rrr}
      0 & 1 & 0\\
      1 & 0 & 1\\
      0 & 1 & 0\end{array} \right).
      \end{eqnarray*}
The spectral decomposition of $A$ is therefore $A= -\sqrt{2}E_{1} + E_{2} +\sqrt{2}E_{3}$, where the idempotents are
\begin{eqnarray*}
       \begin{array}{lll}
     E_{1} = \frac{1}{4} \left(\begin{array}{rrr}
               1 & -\sqrt{2} & 1 \\
               -\sqrt{2} & 2 & -\sqrt{2} \\
               1 & -\sqrt{2} & 1 \end{array} \right), &
     E_{2} = \frac{1}{2} \left(\begin{array}{rrr}
               1 & 0 & -1 \\
               0 & 0 & 0 \\
               -1 & 0 & 1 \end{array} \right), &
     E_{3} = \frac{1}{4} \left(\begin{array}{rrr}
               1 & \sqrt{2} & 1 \\
               \sqrt{2} & 2 & \sqrt{2} \\
               1 & \sqrt{2} & 1 \end{array} \right). \end{array}
\end{eqnarray*}
So the transition matrix of $P_{3}$ is $H_{A}(t)= \exp{\left(-i (-\sqrt{2}) t\right)} E_{1} + E_{2} +\exp{\left(-i \sqrt{2} t\right)} E_{3}$. At $t=\frac{\pi}{\sqrt{2}}$ we have,
\begin{eqnarray}
 H_{A}\left(\frac{\pi}{\sqrt{2}}\right) = -E_{1} + E_{2} - E_{3}
                            & = & \left(\begin{array}{rrr}
                                0 & 0 &-1\\
                                0 &-1 & 0\\
                               -1 & 0 & 0 \end{array}\right).  \nonumber
\end{eqnarray} 
This implies that $P_{3}$ exhibits perfect state transfer from the vertex $1$ to $3$ at time $t = \frac{\pi}{\sqrt{2}}$ and it is periodic at the vertex $2$ at the same time.
\par Let $G$ be a graph with adjacency matrix $A$. The set of all automorphisms of $G$ is denoted by $Aut\left(G\right)$. If $G$ admits perfect state transfer from the vertex $u$ to $v$ at time $\tau$ then $H_{A}(\tau)e_{u}=\gamma e_{v}$, where $\gamma$ is a complex number of unit modulus. Let $f\in Aut\left(G\right)$ and $Q$ be the permutation matrix of $f$. Then $Q$ commutes with $A$ and hence $Q$ commutes with $H_{A}(\tau)$ as $H_{A}(\tau)$ is a polynomial in $A$. Therefore we have $H_{A}(\tau)Qe_{u}=QH_{A}(\tau)e_{u}=\gamma Qe_{v}$. Note that $Qe_{u}=e_{f(u)}$ and $Qe_{v}=e_{f(v)}$. Thus we have the following result deduced from the proof of the Lemma 4.1 in \cite{god2}.
\begin{lem}\label{f3.1}
Let $f$ be an automorphism of a graph $G$. If perfect state transfer occurs between the vertices $u$ and $v$ of $G$ then perfect state transfer occurs between the vertices $f(u)$ and $f(v)$.
\end{lem}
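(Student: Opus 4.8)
The plan is to transport the perfect-state-transfer relation through the permutation matrix of $f$, exactly along the lines of the discussion preceding the statement. First I would rewrite the hypothesis in vector form: if perfect state transfer occurs between $u$ and $v$ at some time $\tau\in\Rl$, then, since $H_A(\tau)$ is symmetric and unitary, the column $H_A(\tau)e_u$ is a unit vector whose $v$-th entry has modulus $1$; hence every other entry of that column vanishes and $H_A(\tau)e_u=\gamma e_v$ for some scalar $\gamma$ with $|\gamma|=1$. This is the only place a small word of care is needed, and it was already noted in Section~2.

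Next I would bring in the automorphism. Let $Q$ be the permutation matrix of $f$. Because $f$ is an automorphism of $G$, $Q$ commutes with $A$, and since $H_A(\tau)$ is a polynomial in $A$ (via the Lagrange interpolation argument given in Section~2), $Q$ also commutes with $H_A(\tau)$. Applying $Q$ to both sides of $H_A(\tau)e_u=\gamma e_v$ and commuting yields
\begin{eqnarray*}
H_A(\tau)(Qe_u)=QH_A(\tau)e_u=\gamma\,(Qe_v).
\end{eqnarray*}
Using $Qe_u=e_{f(u)}$ and $Qe_v=e_{f(v)}$, this becomes $H_A(\tau)e_{f(u)}=\gamma e_{f(v)}$, so the $f(u)f(v)$-entry of $H_A(\tau)$ equals $\gamma$ and therefore has unit modulus; that is, perfect state transfer occurs between $f(u)$ and $f(v)$ at the same time $\tau$.

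I do not expect any genuine obstacle: the argument is a direct consequence of the commutation $QA=AQ$ together with the fact that $H_A(\tau)$ is a polynomial in $A$, both of which are available from the preliminaries. If anything, the only subtlety worth stating explicitly is the equivalence between the entrywise formulation $|e_u^{T}H_A(\tau)e_v|=1$ and the vector formulation $H_A(\tau)e_u=\gamma e_v$, which rests on the unitarity of $H_A(\tau)$; once that is recorded, the remainder is the short bookkeeping above, and the proof is essentially the one extracted from Lemma~4.1 of \cite{god2}.
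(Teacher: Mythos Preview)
Your proposal is correct and follows essentially the same argument as the paper: the proof is the discussion immediately preceding the lemma, using that the permutation matrix $Q$ of $f$ commutes with $A$ and hence with the polynomial $H_A(\tau)$, then applying $Q$ to both sides of $H_A(\tau)e_u=\gamma e_v$. Your only addition is spelling out, via unitarity, why the entrywise condition $|e_u^{T}H_A(\tau)e_v|=1$ is equivalent to $H_A(\tau)e_u=\gamma e_v$, which the paper takes for granted.
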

Using Lemma \ref{f3.1}, we find that if $f:G\rightarrow H$ is an isomorphism and if there is perfect state transfer from the vertex $u$ to $v$ in $G$ then there is perfect state transfer from the vertex $f(u)$ to $f(v)$ in $H$ as well.

\par More information regarding perfect state transfer and periodicity can be found in \cite{god1, stev}.
\subsection{Kronecker Product of graphs} The Kronecker product on two graphs $G_{1}$ and $G_{2}$ with vertex set $V_{1}$ and $V_{2}$ is the graph $G_{1}\times G_{2}$, whose vertex set is $V_{1}\times V_{2}$. Two vertices $(u_{1},v_{1})$ and $(u_{2},v_{2})$ are adjacent in $G_{1}\times G_{2}$ whenever $u_{1}$ is adjacent to $u_{2}$ in $G_{1}$ and $v_{1}$ is adjacent to $v_{2}$ in $G_{2}$. Let  $G_{1}$ and $G_{2}$ have the adjacency matrices $A$ and $B$, respectively, with respect to some ordering of vertices. Then $G_{1}\times G_{2}$ has the adjacency matrix $ C:= A\otimes B $ with respect to the dictionary ordering of vertices.
\par The next result enables us to find the transition matrix for Kronecker product of graphs when the transition matrix for one of the graphs is known. The result has already been proved in \cite{god1}. We include a different proof of the result.
\begin{prop}\cite{god1}\label{aa}
Let $G_{1}$ and $G_{2}$ be two graphs having adjacency matrices $A$ and $B$. If the spectral decomposition of $B=\sum\limits_{s=1}^{q}\mu_{s}F_{s}$ then $G_{1}\times G_{2}$ has the transition matrix $\sum\limits_{s=1}^{q} H_{A}(\mu_{s}t)\otimes F_{s}$, where $H_{A}(t)$ is the transition matrix for $G_{1}$.
\end{prop}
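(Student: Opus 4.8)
The plan is to work directly from the series definition of the transition matrix, using only the spectral decomposition of $B$. Write $C=A\otimes B$ for the adjacency matrix of $G_{1}\times G_{2}$, so that $H_{C}(t)=\sum_{k\geq 0}\frac{(-it)^{k}}{k!}C^{k}$. The key observation is that $C=A\otimes\left(\sum_{s=1}^{q}\mu_{s}F_{s}\right)=\sum_{s=1}^{q}\mu_{s}(A\otimes F_{s})$, and the summands $A\otimes F_{s}$ behave almost like a system of orthogonal projections living in the second factor: since $F_{s}^{2}=F_{s}$ and $F_{s}F_{s'}=0$ for $s\neq s'$, the mixed-product property of the Kronecker product gives $(A\otimes F_{s})(A\otimes F_{s'})=A^{2}\otimes(F_{s}F_{s'})=0$ whenever $s\neq s'$, while $(A\otimes F_{s})^{j}=A^{j}\otimes F_{s}$ for every $j\geq 1$.

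Next I would compute the powers of $C$. Expanding $C^{k}=\left(\sum_{s}\mu_{s}(A\otimes F_{s})\right)^{k}$ by the multinomial theorem, every mixed term contains a factor $(A\otimes F_{s})(A\otimes F_{s'})$ with $s\neq s'$ and therefore vanishes, leaving $C^{k}=\sum_{s=1}^{q}\mu_{s}^{k}\,(A^{k}\otimes F_{s})$ for all $k\geq 1$. For $k=0$ the same identity holds because $\sum_{s=1}^{q}F_{s}=I$ forces $I=I\otimes I=\sum_{s}(A^{0}\otimes F_{s})$. Substituting into the series and interchanging the (absolutely convergent) sum over $k$ with the finite sum over $s$ yields
\begin{eqnarray*}
H_{C}(t) &=& \sum_{k\geq 0}\frac{(-it)^{k}}{k!}\sum_{s=1}^{q}\mu_{s}^{k}(A^{k}\otimes F_{s}) \\
         &=& \sum_{s=1}^{q}\left(\sum_{k\geq 0}\frac{(-i\mu_{s}t)^{k}}{k!}A^{k}\right)\otimes F_{s} \\
         &=& \sum_{s=1}^{q}H_{A}(\mu_{s}t)\otimes F_{s},
\end{eqnarray*}
which is exactly the claimed formula.

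I do not expect a genuine obstacle here; the only points needing a little care are the vanishing of the mixed terms in the multinomial expansion (which rests on the orthogonality $F_{s}F_{s'}=0$ together with the mixed-product law for $\otimes$) and the bookkeeping of the $k=0$ term, where one invokes $\sum_{s}F_{s}=I$. An essentially equivalent alternative is to also take a spectral decomposition $A=\sum_{r}\lambda_{r}E_{r}$: then $\{E_{r}\otimes F_{s}\}$ is a complete system of orthogonal idempotents and $C=\sum_{r,s}\lambda_{r}\mu_{s}(E_{r}\otimes F_{s})$ is the spectral decomposition of $C$, so that $H_{C}(t)=\sum_{r,s}e^{-it\lambda_{r}\mu_{s}}(E_{r}\otimes F_{s})=\sum_{s}\left(\sum_{r}e^{-i(\mu_{s}t)\lambda_{r}}E_{r}\right)\otimes F_{s}=\sum_{s}H_{A}(\mu_{s}t)\otimes F_{s}$. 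I would most likely present the series version, since it is the more self-contained of the two and matches the computational style used above.
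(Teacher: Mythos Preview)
Your argument is correct. The paper, however, takes precisely the alternative you sketch at the end: it introduces a spectral decomposition $A=\sum_{r}\lambda_{r}E_{r}$ of the first factor as well, notes that $C=\sum_{r,s}\lambda_{r}\mu_{s}(E_{r}\otimes F_{s})$ is then the spectral decomposition of $C$, and reads off $H_{C}(t)=\sum_{r,s}e^{-it\lambda_{r}\mu_{s}}(E_{r}\otimes F_{s})=\sum_{s}H_{A}(\mu_{s}t)\otimes F_{s}$ in one line. Your primary route via the power series is a little more hands-on but has the virtue of using only the hypothesis actually stated (the spectral decomposition of $B$), whereas the paper's version is shorter because it leverages the functional-calculus identity $H_{C}(t)=\sum e^{-it\lambda}E_{\lambda}$ that was set up earlier in the preliminaries. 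Either presentation is fine; just be aware that the double-decomposition version is the standard one-liner.
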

\begin{proof}
Let the spectral decomposition of $A$ be $A=\sum\limits_{r=1}^{p}\lambda_{r}E_{r}$. The spectral decomposition of $C= A\otimes B$ is therefore $C=\sum\limits_{r=1}^{p}\sum\limits_{s=1}^{q} \lambda_{r}\mu_{s}\left(E_{r}\otimes F_{s}\right)$. So the transition matrix for Kronecker product becomes
\begin{eqnarray*}
H_{C}(t) = \sum_{r=1}^{p}\sum_{s=1}^{q} \exp(-it\lambda_{r}\mu_{s})\left(E_{r}\otimes F_{s}\right)
& = &  \sum_{s=1}^{q}\left(\sum_{r=1}^{p}\exp(-it\lambda_{r}\mu_{s})E_{r}\right)\otimes F_{s} \\
& = & \sum_{s=1}^{q} H_{A}(\mu_{s}t)\otimes F_{s}.
\end{eqnarray*}
Hence the result.
\end{proof}
More information on perfect state transfer of Kronecker products can be found in \cite{ge, god1}.

\section{Definitions and basic results}
In this section, we provide some definitions and results that are useful in finding perfect state transfer on NEPS of $P_{3}$.
 \begin{definition}[Center of a matrix]
We define the center of a square matrix $A=\left(a_{i,j}\right)$ of odd order $n$ by $\mathscr{C}(A):=a_{\frac{n+1}{2},\frac{n+1}{2}}$.
\end{definition}

\begin{definition}
Let $A=\left(a_{i,j}\right)$ be a square matrix of odd order $n\geq3$. We define $\mathscr{M}_{3}(A)$ to be the $3\times 3$ principal sub-matrix of $A$ that lies in the rows indexed by $\left\lbrace \frac{n-1}{2}, \frac{n+1}{2}, \frac{n+3}{2}\right\rbrace$. 
\end{definition}
It is easy to see that both $\mathscr{C}$ and $\mathscr{M}_{3}$ are linear functions on the set of all matrices of odd order $n\geq 3$. \emph{i.e.} If $A$ and $B$ be two matrices of odd order $n\geq 3$ and $\alpha$ a scalar then $\mathscr{C}(\alpha A+B)=\alpha\mathscr{C}(A)+\mathscr{C}(B)$ and $\mathscr{M}_{3}(\alpha A+B)=\alpha\mathscr{M}_{3}(A)+\mathscr{M}_{3}(B)$.
\par In the following result we find the center $\mathscr{C}$ and $\mathscr{M}_{3}$ for Kronecker product of some matrices with appropriate sizes.
\begin{prop}\label{lb1}
Let $B_{1},\ldots,B_{n}$ be square matrices such that order of each matrix is an odd number greater than or equal to $3$. If $A= B_{1}\otimes \ldots \otimes B_{n}$ then $\mathscr{C}(A)=\prod\limits_{i=1}^{n}\mathscr{C}(B_{i})$ and $\mathscr{M}_{3}(A)=\left(\prod\limits_{i=1}^{n-1}\mathscr{C}(B_{i})\right)\mathscr{M}_{3}\left(B_{n}\right)$.
\end{prop}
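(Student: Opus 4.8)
The plan is to reduce everything to the two-factor case and then induct on $n$. First I would record the base case $n=1$, where the claim is trivial: $\mathscr{C}(B_1)=\mathscr{C}(B_1)$, and $\mathscr{M}_{3}(B_1)=\mathscr{M}_{3}(B_1)$ with the empty product equal to $1$. For the inductive step it suffices to prove the two identities for a Kronecker product $B\otimes C$ of two matrices of odd orders $m,n\geq 3$, namely $\mathscr{C}(B\otimes C)=\mathscr{C}(B)\,\mathscr{C}(C)$ and $\mathscr{M}_{3}(B\otimes C)=\mathscr{C}(B)\,\mathscr{M}_{3}(C)$; applying the first to the product $B_1\otimes\cdots\otimes B_{n-1}$ (which has odd order $\geq 3$) and the second with $B=B_1\otimes\cdots\otimes B_{n-1}$, $C=B_n$, together with the induction hypothesis $\mathscr{C}(B_1\otimes\cdots\otimes B_{n-1})=\prod_{i=1}^{n-1}\mathscr{C}(B_i)$, yields the stated formulas.

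For the two-factor identities, the key is to track index positions under the Kronecker product. Recall that if $B=(b_{k,l})$ has order $m$ and $C=(c_{p,q})$ has order $n$, then the entry of $B\otimes C$ in row $(k-1)n+p$ and column $(l-1)n+q$ equals $b_{k,l}c_{p,q}$, for $1\leq k,l\leq m$ and $1\leq p,q\leq n$. The order of $B\otimes C$ is $mn$, which is odd since $m$ and $n$ are odd, so $\mathscr{C}$ and $\mathscr{M}_3$ are defined. The central index of $B\otimes C$ is $\frac{mn+1}{2}$; writing $m=2a+1$, $n=2b+1$ I would check the arithmetic identity $\frac{mn+1}{2}=\left(\frac{m+1}{2}-1\right)n+\frac{n+1}{2}=an+\frac{n+1}{2}$, so the central index corresponds to $k=l=\frac{m+1}{2}$ and $p=q=\frac{n+1}{2}$. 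Hence $\mathscr{C}(B\otimes C)=b_{\frac{m+1}{2},\frac{m+1}{2}}\,c_{\frac{n+1}{2},\frac{n+1}{2}}=\mathscr{C}(B)\,\mathscr{C}(C)$.

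For $\mathscr{M}_3$, the three central indices of $B\otimes C$ are $\frac{mn-1}{2},\frac{mn+1}{2},\frac{mn+3}{2}$, i.e. $an+\frac{n-1}{2},\ an+\frac{n+1}{2},\ an+\frac{n+3}{2}$. Since $\frac{n+3}{2}\leq n$ (as $n\geq 3$) and $\frac{n-1}{2}\geq 1$, all three indices share the same block coordinate $k=\frac{m+1}{2}$ and differ only in the within-block coordinate $p\in\{\frac{n-1}{2},\frac{n+1}{2},\frac{n+3}{2}\}$. Therefore the $3\times 3$ central principal submatrix of $B\otimes C$ has $(p,q)$-entry $b_{\frac{m+1}{2},\frac{m+1}{2}}\,c_{p',q'}$ where $p',q'$ range over $\{\frac{n-1}{2},\frac{n+1}{2},\frac{n+3}{2}\}$, which is exactly $\mathscr{C}(B)\,\mathscr{M}_3(C)$. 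I expect the only mildly delicate point to be the case $n=3$ in the $\mathscr{M}_3$ computation, where $\frac{n-1}{2}=1$ and $\frac{n+3}{2}=3=n$, so the three within-block indices occupy the full range $\{1,2,3\}$ but still stay inside a single block — the inequalities $1\leq\frac{n-1}{2}$ and $\frac{n+3}{2}\leq n$ guarantee no spillover into adjacent blocks, and this is the one place worth stating explicitly. Everything else is bookkeeping with the Kronecker indexing.
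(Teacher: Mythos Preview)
Your proposal is correct and follows essentially the same approach as the paper: reduce to the two-factor identities $\mathscr{C}(B\otimes C)=\mathscr{C}(B)\,\mathscr{C}(C)$ and $\mathscr{M}_3(B\otimes C)=\mathscr{C}(B)\,\mathscr{M}_3(C)$, then induct by writing $A=(B_1\otimes\cdots\otimes B_{n-1})\otimes B_n$. The paper simply asserts that the $n=2$ case ``follows directly from the definition of Kronecker product,'' whereas you supply the explicit index bookkeeping (including the check that $1\le\frac{n-1}{2}$ and $\frac{n+3}{2}\le n$ keep the three central rows inside a single block), which is a welcome elaboration rather than a different argument.
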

\begin{proof}
For $n=2$ the result follows directly from the definition of Kronecker product. Let us assume $A'=B_{1}\otimes\ldots\otimes B_{k}$ and also let $C=B_{1}\otimes\ldots\otimes B_{k-1}$ so that $A'=C\otimes B_{k}$. This implies that $\mathscr{C}(A')=\mathscr{C}(C)\mathscr{C}(B_{k})=\left(\prod\limits_{i=1}^{k-1}\mathscr{C}(B_{i})\right)\mathscr{C}(B_{k})=\prod\limits_{i=1}^{k}\mathscr{C}(B_{i})$ and $\mathscr{M}_{3}(A')=\mathscr{C}(C)\mathscr{M}_{3}\left(B_{k}\right)=\left(\prod\limits_{i=1}^{k-1}\mathscr{C}(B_{i})\right)\mathscr{M}_{3}\left(B_{k}\right)$.
Hence the result follows by induction.
\end{proof}

Let $U_{1}, U_{2}$ be two unitary matrices of odd order $n\geq 3$. If both the matrices $\mathscr{M}_{3}(U_{1})$ and $\mathscr{M}_{3}(U_{2})$ are also unitary then
\begin{eqnarray}
U_{1} U_{2}& = &\left(\begin{array}{ccc}
      * & O & *\\
      O & \mathscr{M}_{3}(U_{1}) & O\\
      * & O & *\end{array} \right)
      \left(\begin{array}{ccc}
      * & O & *\\
      O & \mathscr{M}_{3}(U_{2}) & O\\
      * & O & *\end{array} \right) \nonumber\\
& = & \left(\begin{array}{ccc}
      * & O & *\\
      O & \mathscr{M}_{3}(U_{1})\mathscr{M}_{3}(U_{2}) & O\\
      * & O & *\end{array} \right). \nonumber
\end{eqnarray}
Notice that $\mathscr{M}_{3}(U_{1})\mathscr{M}_{3}(U_{2})$ is also a unitary matrix. Thus we have the following result which can be proved easily by induction.
\begin{prop}\label{f5.1}
If $U_{1},\ldots, U_{k}$ are unitary matrices of odd order $n\geq 3$ and $\mathscr{M}_{3}(U_{j})$ is unitary for each $1\leq j\leq k$ then $\mathscr{M}_{3}\left(\prod\limits_{j=1}^{k}U_{j}\right)=\prod\limits_{j=1}^{k}\mathscr{M}_{3}(U_{j})$.
\end{prop}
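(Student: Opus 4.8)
The plan is to prove the identity by induction on $k$, with the two-factor statement serving as the engine of the inductive step. The case $k=1$ is trivial, and the case $k=2$ is precisely the block computation displayed just before the statement. That computation, however, quietly relies on one structural fact that I would isolate and prove first: if $U$ is a unitary matrix of odd order $n\ge 3$ whose central principal submatrix $\mathscr{M}_{3}(U)$ is unitary, then every entry of $U$ lying in a row indexed by $\left\lbrace\frac{n-1}{2},\frac{n+1}{2},\frac{n+3}{2}\right\rbrace$ but outside the columns with those same indices vanishes, and symmetrically for columns; that is, $U$ is block-diagonal with diagonal blocks of sizes $\frac{n-3}{2}$, $3$, $\frac{n-3}{2}$ and central block $\mathscr{M}_{3}(U)$.

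The proof of this fact is short: since $U$ is unitary, each of its rows is a unit vector. The restriction of the three rows indexed by $\left\lbrace\frac{n-1}{2},\frac{n+1}{2},\frac{n+3}{2}\right\rbrace$ to the three columns carrying the same indices is, by definition, the matrix $\mathscr{M}_{3}(U)$, whose rows are already unit vectors because $\mathscr{M}_{3}(U)$ is unitary. Hence the remaining entries in those three rows of $U$ are $0$. Applying the same reasoning to $U^{*}$ (or directly to the columns of $U$) gives the corresponding statement for the three distinguished columns, and the claimed block form follows.

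Granting this, the $k=2$ case reads off immediately: for unitary $U_{1},U_{2}$ of odd order $n$ with $\mathscr{M}_{3}(U_{1})$ and $\mathscr{M}_{3}(U_{2})$ unitary, both matrices are block-diagonal as above, so $U_{1}U_{2}$ is block-diagonal with central block $\mathscr{M}_{3}(U_{1})\mathscr{M}_{3}(U_{2})$; this yields $\mathscr{M}_{3}(U_{1}U_{2})=\mathscr{M}_{3}(U_{1})\mathscr{M}_{3}(U_{2})$, and, being a product of unitaries, this central block is again unitary. For the inductive step I would set $V=\prod_{j=1}^{k-1}U_{j}$, which is unitary, and note that by the induction hypothesis $\mathscr{M}_{3}(V)=\prod_{j=1}^{k-1}\mathscr{M}_{3}(U_{j})$ is unitary as a product of unitary matrices; applying the two-factor case to $V$ and $U_{k}$ then gives $\mathscr{M}_{3}\!\left(\prod_{j=1}^{k}U_{j}\right)=\mathscr{M}_{3}(V)\mathscr{M}_{3}(U_{k})=\prod_{j=1}^{k}\mathscr{M}_{3}(U_{j})$.

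I do not anticipate a genuine obstacle here; the one point that must not be overlooked is that the induction has to carry along the auxiliary conclusion that $\mathscr{M}_{3}$ of the running product stays unitary, since that hypothesis is exactly what lets the two-factor case be reapplied at the next stage. In other words, the right inductive statement bundles the multiplicativity of $\mathscr{M}_{3}$ together with the preservation of unitarity, rather than the multiplicativity alone.
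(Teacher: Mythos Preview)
Your approach is exactly the paper's: extract the block structure from unitarity, read off the $k=2$ case by block multiplication, and induct while carrying along the unitarity of the running central block. One overstatement to fix: the vanishing of the entries in the three middle rows outside the three middle columns (and symmetrically) does \emph{not} make $U$ block-diagonal---the corner blocks (rows $1,\ldots,\frac{n-3}{2}$ against columns $\frac{n+5}{2},\ldots,n$, and their transposes) need not vanish. What your row-norm argument actually yields is the form
\[
U=\begin{pmatrix} * & O & * \\ O & \mathscr{M}_{3}(U) & O \\ * & O & * \end{pmatrix},
\]
which is precisely what the paper writes. This weaker shape is still closed under multiplication with central block equal to the product of the central blocks, so your $k=2$ step and the induction go through unchanged once you replace ``block-diagonal'' by this pattern.
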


\section{Perfect state transfer on NEPS of $P_{3}$}

Let $\Omega$ be a set of $n$-tuples $\beta=\left(\beta_{1}, \ldots, \beta_{n}\right)$ of symbols $0$ and $1$, which does not contain the $n$-tuple $\left(0, \ldots, 0\right)$. The NEPS \cite{cev} on the graphs $G_{1}, \ldots, G_{n}$ with basis $\Omega$ is the graph $NEPS\left(G_{1}, \ldots, G_{n};\Omega\right)$ whose vertex set is $V\left(G_{1}\right)\times\ldots\times V\left(G_{n}\right)$. Two vertices $(x_{1}, \ldots, x_{n})$ and $(y_{1}, \ldots, y_{n})$ are adjacent in $NEPS\left(G_{1}, \ldots, G_{n}; \Omega\right)$ if and only if there is an $n$-tuple $\left(\beta_{1}, \ldots, \beta_{n}\right)$ in $\Omega$ such that $x_{i}=y_{i}$ in $G_{i}$ exactly when $\beta_{i}=0$ and $x_{i}$ is adjacent to $y_{i}$ in $G_{i}$ exactly when $\beta_{i}=1$. The graphs  $G_{1}, \ldots, G_{n}$ are called factors of the NEPS.
\par From now onwards, we consider the $n$-tuples of $\Omega$ as vectors in $\Zl_{2}^{n}$. Also we consider the basis $\Omega$ both as a set of vectors and as a matrix having these vectors as its rows. We denote the rank of the matrix $\Omega$ by $r(\Omega)$ over the field $\Zl_{2}$.
\par The following result shows that the transition matrix of a NEPS can be written as a product of the transition matrices of some of its spanning subgraphs. 

\begin{prop}\label{a1}
The graph $NEPS\left(G_{1}, \ldots,G_{n}; \Omega\right)$ has the transition matrix $H_{\Omega}(t)=\prod\limits_{\beta\in\Omega}H_{\beta}(t)$, where $H_{\beta}(t)$ is the transition matrix for the spanning subgraph $NEPS\left(G_{1}, \ldots,G_{n}; \left\lbrace\beta\right\rbrace\right)$, $\beta\in\Omega$.
\end{prop}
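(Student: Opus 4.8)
The plan is to write the adjacency matrix of $NEPS(G_{1},\ldots,G_{n};\Omega)$ as a sum of pairwise commuting matrices, one for each $\beta\in\Omega$, and then to use that for commuting matrices the exponential of a sum is the product of the exponentials.

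First I would observe that, directly from the adjacency rule defining a NEPS, the adjacency matrix of $NEPS(G_{1},\ldots,G_{n};\Omega)$ in the dictionary ordering of vertices is $A_{\Omega}=\sum_{\beta\in\Omega}A_{\beta}$, where for $\beta=(\beta_{1},\ldots,\beta_{n})$ we put $A_{\beta}=A_{1}^{\beta_{1}}\otimes\cdots\otimes A_{n}^{\beta_{n}}$ (with $A_{i}$ the adjacency matrix of $G_{i}$ and $A_{i}^{0}=I$); this $A_{\beta}$ is precisely the adjacency matrix of the spanning subgraph $NEPS(G_{1},\ldots,G_{n};\{\beta\})$, so its transition matrix is $H_{\beta}(t)=\exp(-itA_{\beta})$.

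Next I would show that the matrices $\{A_{\beta}:\beta\in\Omega\}$ commute pairwise. Taking the spectral decomposition $A_{i}=\sum_{r}\lambda_{i,r}E_{i,r}$ of each factor, one obtains
\[
A_{\beta}=\sum_{r_{1},\ldots,r_{n}}\Bigl(\textstyle\prod_{i:\beta_{i}=1}\lambda_{i,r_{i}}\Bigr)\,E_{1,r_{1}}\otimes\cdots\otimes E_{n,r_{n}},
\]
so every $A_{\beta}$ is a linear combination of the single family of orthogonal idempotents $E_{1,r_{1}}\otimes\cdots\otimes E_{n,r_{n}}$. Hence $A_{\beta}A_{\beta'}=A_{\beta'}A_{\beta}$ for all $\beta,\beta'\in\Omega$; in particular the product $\prod_{\beta\in\Omega}$ occurring in the statement is independent of the order of its factors.

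Finally, because the summands of $A_{\Omega}=\sum_{\beta\in\Omega}A_{\beta}$ commute, so do the matrices $-itA_{\beta}$, and therefore
\[
H_{\Omega}(t)=\exp(-itA_{\Omega})=\exp\!\Bigl(\textstyle\sum_{\beta\in\Omega}(-itA_{\beta})\Bigr)=\prod_{\beta\in\Omega}\exp(-itA_{\beta})=\prod_{\beta\in\Omega}H_{\beta}(t),
\]
using $e^{X+Y}=e^{X}e^{Y}$ for commuting $X,Y$ repeatedly. The only genuinely non-formal point is the commutativity of the $A_{\beta}$, which is exactly what the common tensor-product eigenprojectors above supply; alternatively one could deduce the $n=2$ case from Proposition \ref{aa} and induct on $|\Omega|$, but the spectral argument handles all cases at once.
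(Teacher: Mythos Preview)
Your proof is correct and follows the same overall strategy as the paper: write $A_{\Omega}=\sum_{\beta\in\Omega}A_{\beta}$ with $A_{\beta}=A_{1}^{\beta_{1}}\otimes\cdots\otimes A_{n}^{\beta_{n}}$, argue that the summands commute, and then factor the exponential using $e^{X+Y}=e^{X}e^{Y}$ for commuting $X,Y$. The only point of departure is how you justify commutativity. The paper simply notes that each factor $A_{j}^{\beta_{j}}$ is either $A_{j}$ or $I$, so $A_{j}^{\beta_{j}}$ and $A_{j}^{\delta_{j}}$ commute for every $j$, and then applies the mixed-product identity $(S_{1}\otimes T_{1})(S_{2}\otimes T_{2})=(S_{1}S_{2})\otimes(T_{1}T_{2})$ componentwise to conclude $A_{\beta}A_{\delta}=A_{\delta}A_{\beta}$. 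Your route via the common family of tensor-product eigenprojectors $E_{1,r_{1}}\otimes\cdots\otimes E_{n,r_{n}}$ is equally valid and yields a bit more (an explicit simultaneous spectral decomposition of all the $A_{\beta}$), but the paper's argument is more elementary and avoids invoking the spectral theorem for the factor graphs.
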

\begin{proof}
Let the graphs $G_{1}, \ldots,G_{n}$ have adjacency matrices $A_{1}, \ldots, A_{n}$, respectively, with respect to some ordering of vertices in each $G_{k}$ ($1\leq k\leq n$). Then the graph $NEPS\left(G_{1}, \ldots,G_{n}; \Omega\right)$ has the adjacency matrix $A_{\Omega} = \sum\limits_{\left(\beta_{1}, \ldots, \beta_{n}\right) \in \Omega} A_{1}^{\beta_{1}} \otimes \ldots\otimes A_{n}^{\beta_{n}}$ with respect to the dictionary ordering of vertices induced by the ordering of vertices in each of its factor $G_{k}$ ($1\leq k\leq n$). See \cite{cev} for details. For $\beta=\left(\beta_{1}, \ldots, \beta_{n}\right)\in\Omega$, let us consider $A_{\beta}=A_{1}^{\beta_{1}}\otimes \ldots\otimes A_{n}^{\beta_{n}}$. Here $A_{\beta}$ can be considered as the adjacency matrix for the spanning subgraph $NEPS\left(G_{1}, \ldots,G_{n};\left\lbrace\beta \right\rbrace \right)$. Notice that if $\beta,\delta \in\Omega$, then $A_{j}^{\beta_{j}}$ and $A_{j}^{\delta_{j}}$ are either $A_{j}$ or $I$ and so $A_{j}^{\beta_{j}}$, $A_{j}^{\delta_{j}}$ commutes. Using the property that $\left(S_{1}\otimes T_{1}\right)\left(S_{2}\otimes T_{2}\right)=\left(S_{1} S_{2}\right)\otimes\left(T_{1} T_{2}\right)$, we can see that if $S_{1}$ commutes with $S_{2}$ and $T_{1}$ commutes with $T_{2}$ then $S_{1}\otimes T_{1}$ commutes with $S_{2}\otimes T_{2}$. This implies $A_{\beta}A_{\delta}=A_{\delta}A_{\beta}$ for $\beta,\delta \in\Omega$. Therefore the transition matrix for $NEPS\left(G_{1}, \ldots,G_{n};\Omega\right)$ becomes
\begin{eqnarray*}
H_{\Omega}(t)
=\exp{\left(-it\sum\limits_{\beta\in\Omega}A_{\beta}\right)}
& = &\prod\limits_{\beta\in\Omega}\exp{\left(-itA_{\beta}\right)}, \text{ as $A_{\beta}A_{\delta}=A_{\delta}A_{\beta}$} \\
& = &\prod\limits_{\beta\in\Omega}H_{\beta}(t).
\end{eqnarray*}
Hence the result follows.
\end{proof}
So it is evident that if we can find the transition matrices $H_{\beta}(t)$ for each of the spanning subgraphs $NEPS\left(G_{1}, \ldots,G_{n};\left\lbrace\beta \right\rbrace \right)$ then we can find the transition matrix for the graph $NEPS\left(G_{1}, \ldots,G_{n}; \Omega\right)$ quite easily. 
\par From now onwards, we consider NEPS having factor graphs $P_{3}$ only.  Also, for each $\beta\in\Omega$, we denote $s(\beta)$ to be the number of non-zero entries in $\beta$. Let us denote $\tau_{n}=\frac{\pi}{\left(\sqrt{2}\right)^{n}}$, $n\in\Nl$ so that $\sqrt{2}\tau_{n+1}=\tau_{n}$ for every $n\in\Nl$. Consider the matrix
\vspace{-0.2 in}\begin{eqnarray*}
      P=\left(\begin{array}{ccc}
      0 & 0 & 1\\
      0 & 1 & 0\\
      1 & 0 & 0\end{array} \right).
      \end{eqnarray*}
Recall that the transition matrix for $P_{3}$ at $\tau_{1}=\frac{\pi}{\sqrt{2}}$ is $-E_{1} + E_{2} - E_{3}= -P$, where $E_{1}, E_{2}$ and $E_{3}$ are the idempotents corresponding to the adjacency matrix for $P_{3}$. In the following lemma we find the principal submatrix $\mathscr{M}_{3}\left(H_{\beta}(t)\right)$ at a fixed time, depending on $\beta$, where $H_{\beta}(t)$ is the transition matrix for the graph $NEPS\left(P_{3}, \ldots,P_{3}; \left\lbrace\beta\right\rbrace\right)$.

\begin{lem}\label{b1}
Let $H_{\beta}(t)$ be the transition matrix for $NEPS\left(P_{3}, \ldots,P_{3}; \left\lbrace\beta\right\rbrace\right)$. If $\beta=\left(\beta_{1},\ldots \beta_{n}\right)$ and $s(\beta)=k$ then $\mathscr{M}_{3}\left(H_{\beta}(\tau_{k})\right)$ is $-I$ or $-P$ according as $\beta_{n}$ is $0$ or $1$ with $H_{\beta}(-\tau_{k})=H_{\beta}(\tau_{k})$.
\end{lem}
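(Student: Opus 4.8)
The plan is to induct on the number $n$ of factors. It is convenient to prove the slightly stronger statement that also allows $\beta$ to be the all-zeros tuple, under the convention $\tau_{0}=\pi$ (consistent with $\sqrt{2}\,\tau_{m+1}=\tau_{m}$), and to carry along in the inductive hypothesis not only the claim about $\mathscr{M}_{3}$ but also the symmetry $H_{\beta}(-\tau_{k})=H_{\beta}(\tau_{k})$. Note that the $\mathscr{M}_{3}$-claim automatically gives $\mathscr{C}(H_{\beta}(\tau_{k}))=-1$: directly from the definitions $\mathscr{C}(M)=\mathscr{C}(\mathscr{M}_{3}(M))$ for every matrix of odd order $\geq 3$, and both $-I$ and $-P$ have center $-1$. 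This is exactly what makes Proposition \ref{lb1} applicable in the induction.

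For the base case $n=1$: if $\beta=(1)$ then $NEPS(P_{3};\{\beta\})$ is $P_{3}$, $H_{\beta}(\tau_{1})=-P$ by the computation recalled just before the lemma, and since this matrix is $3\times 3$ we get $\mathscr{M}_{3}(H_{\beta}(\tau_{1}))=-P$ (matching $\beta_{n}=1$), while $H_{A}(-\tau_{1})=e^{-i\sqrt{2}\tau_{1}}E_{1}+E_{2}+e^{i\sqrt{2}\tau_{1}}E_{3}=-E_{1}+E_{2}-E_{3}=H_{A}(\tau_{1})$; if $\beta=(0)$ then $A_{\beta}=I_{3}$, so $H_{\beta}(t)=e^{-it}I_{3}$ and $H_{\beta}(\pm\tau_{0})=e^{\mp i\pi}I_{3}=-I_{3}$ (matching $\beta_{n}=0$).

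For the inductive step with $n\geq 2$, write $\beta=(\beta',\beta_{n})$ with $\beta'=(\beta_{1},\ldots,\beta_{n-1})$, and note $A_{\beta}=A_{\beta'}\otimes A^{\beta_{n}}$, where $A$ is the adjacency matrix of $P_{3}$. If $\beta_{n}=0$ then $s(\beta')=k$ and $A_{\beta}=A_{\beta'}\otimes I_{3}$, so powers, and hence $\exp$, distribute over the second factor, giving $H_{\beta}(t)=H_{\beta'}(t)\otimes I_{3}$; then $H_{\beta}(-\tau_{k})=H_{\beta'}(-\tau_{k})\otimes I_{3}=H_{\beta'}(\tau_{k})\otimes I_{3}=H_{\beta}(\tau_{k})$ by the inductive hypothesis, and Proposition \ref{lb1} together with the observation above gives $\mathscr{M}_{3}(H_{\beta}(\tau_{k}))=\mathscr{C}(H_{\beta'}(\tau_{k}))\,\mathscr{M}_{3}(I_{3})=-I_{3}$. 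If $\beta_{n}=1$ then $s(\beta')=k-1$ and $A_{\beta}=A_{\beta'}\otimes A$, so Proposition \ref{aa} (applied with second factor $P_{3}$, whose spectral decomposition is $-\sqrt{2}E_{1}+E_{2}+\sqrt{2}E_{3}$) gives $H_{\beta}(t)=H_{\beta'}(-\sqrt{2}\,t)\otimes E_{1}+H_{\beta'}(0)\otimes E_{2}+H_{\beta'}(\sqrt{2}\,t)\otimes E_{3}$ with $H_{\beta'}(0)=I$. Evaluating at $t=\tau_{k}$, using $\sqrt{2}\,\tau_{k}=\tau_{k-1}$ and the inductive hypothesis $H_{\beta'}(-\tau_{k-1})=H_{\beta'}(\tau_{k-1})$, gives $H_{\beta}(-\tau_{k})=H_{\beta}(\tau_{k})$; and applying linearity of $\mathscr{M}_{3}$, then Proposition \ref{lb1} with $\mathscr{C}(H_{\beta'}(\tau_{k-1}))=-1$ and $\mathscr{C}(I)=1$, and then $\mathscr{M}_{3}(E_{j})=E_{j}$ (since each $E_{j}$ is $3\times 3$), yields $\mathscr{M}_{3}(H_{\beta}(\tau_{k}))=-E_{1}+E_{2}-E_{3}=-P$ (matching $\beta_{n}=1$). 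This completes the induction.

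The computation is essentially bookkeeping once the setup is fixed; the points needing care are: choosing $n$ (not $k$) as the induction variable, since $k$ does not decrease in the $\beta_{n}=0$ case; bundling the symmetry $H_{\beta}(-\tau_{k})=H_{\beta}(\tau_{k})$ and the consequence $\mathscr{C}(H_{\beta}(\tau_{k}))=-1$ into the hypothesis; and observing that Propositions \ref{aa} and \ref{lb1} are purely linear-algebraic, hence apply verbatim even when $A_{\beta'}$, being a Kronecker product that may contain $I_{3}$ factors, is not literally the adjacency matrix of a simple graph (in particular when $\beta'$ is the all-zeros tuple).
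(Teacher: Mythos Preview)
Your proof is correct and follows essentially the same inductive argument as the paper: induct on $n$, split into cases on $\beta_{n}$, use $H_{\beta}(t)=H_{\beta'}(t)\otimes I_{3}$ in the $\beta_{n}=0$ case and Proposition~\ref{aa} in the $\beta_{n}=1$ case, and conclude via Proposition~\ref{lb1} together with $\mathscr{C}(H_{\beta'}(\tau_{k'}))=-1$. Your one refinement---extending the hypothesis to allow $\beta'=\mathbf{0}$ with the convention $\tau_{0}=\pi$---is actually a point the paper glosses over (when $\beta=(0,\ldots,0,1)$ the truncated tuple $\beta^{*}$ is all zeros, so the paper's inductive hypothesis as stated does not literally apply), so your version is slightly more careful there.
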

\begin{proof}
We prove this by induction on $n$, the length of $\beta$.
For $n=1$ we have $s(\beta)=1$ as $\beta\neq \mathbf{0}$ by definition of NEPS. In this case the NEPS is the graph $P_{3}$ itself. Therefore $\mathscr{M}_{3}\left(H_{\beta}(\tau_{1})\right)=\mathscr{M}_{3}\left(-P\right)=-P$ and also $H_{\beta}(-\tau_{1})=H_{\beta}(\tau_{1})$ as $P^{-1}=P$.
\par Assume that the result is true for any $\beta$ of length $n=l$. Consider $\beta=\left(\beta_{1},\ldots,\beta_{l}, \beta_{l+1}\right)$ and let $\beta^{*}=\left(\beta_{1},\ldots \beta_{l}\right)$. If $s(\beta^{*})=k'$ then by our assumption $\mathscr{M}_{3}\left(H_{\beta^{*}}(\tau_{k'})\right)$ is $-I$ or $-P$ according as $\beta_{l}$ is equal to $0$ or $1$ with $H_{\beta^{*}}(-\tau_{k'})=H_{\beta^{*}}(\tau_{k'})$. Now we consider two cases according as $\beta_{l+1}$ is $1$ or $0$.\\
\textbf{Case I:} Let $\beta_{l+1}=1$ so that $s(\beta)=k'+1$. In this case the graph $NEPS\left(P_{3},\ldots, P_{3};\left\lbrace\beta\right\rbrace \right)$ is actually the Kronecker product $NEPS\left(P_{3}, P_{3},\ldots, P_{3};\left\lbrace\beta^{*}\right\rbrace \right)\times P_{3}$. Recall that the spectral decomposition of the adjacency matrix of $P_{3}$ is $A= -\sqrt{2}E_{1} + E_{2} +\sqrt{2}E_{3}$, where $E_{1}, E_{2}$ and $E_{3}$ are the idempotents. By using Proposition \ref{aa} we have
\begin{eqnarray}
H_{\beta}(\tau_{k'+1}) & = & H_{\beta^{*}}(-\sqrt{2}\tau_{k'+1})\otimes E_{1} + H_{\beta^{*}}(0)\otimes E_{2} + H_{\beta^{*}}(\sqrt{2}\tau_{k'+1})\otimes E_{3} \nonumber \\
& = & H_{\beta^{*}}(-\tau_{k'})\otimes E_{1} + I\otimes E_{2} + H_{\beta^{*}}(\tau_{k'})\otimes E_{3}, \text{ as $\sqrt{2}\tau_{k'+1}=\tau_{k'}$ }\nonumber \\
& = & H_{\beta^{*}}(\tau_{k'})\otimes \left(E_{1}+E_{3}\right) + I\otimes E_{2} \nonumber \\
& = & H_{\beta^{*}}(\tau_{k'})\otimes \left(E_{2}+P\right) + I\otimes E_{2}, \text{ as $E_{1}-E_{2}+E_{3}=P$}\nonumber \\
& = & \left(H_{\beta^{*}}(\tau_{k'})+ I\right)\otimes E_{2} + H_{\beta^{*}}(\tau_{k'})\otimes P.
\end{eqnarray}
This implies $\mathscr{M}_{3}\left(H_{\beta}(\tau_{k'+1})\right)=-P$ since $\mathscr{C}\left(H_{\beta^{*}}(\tau_{k'})\right)=-1$. Also $H_{\beta}(-\tau_{k'+1}) = H_{\beta}(\tau_{k'+1})$ because the negative sign can be absorbed in $(1)$.\\
\textbf{Case II:} Let $\beta_{l+1}=0$ so that $s(\beta)=k'$. In this case the adjacency matrix for $NEPS\left(P_{3},\ldots, P_{3};\left\lbrace\beta\right\rbrace \right)$ is $A_{\beta}=A_{\beta^{*}}\otimes I$, where $A_{\beta^{*}}$ is the adjacency matrix for $NEPS\left(P_{3},\ldots, P_{3};\left\lbrace\beta^{*}\right\rbrace \right)$ and $I$ is the identity matrix of order $3$. This implies that $H_{\beta}(\tau_{k'})= \exp{\left(-i\tau_{k'}\left(A_{\beta^{*}}\otimes I\right)\right)}= H_{\beta^{*}}(\tau_{k'})\otimes I$. Therefore we have $\mathscr{M}_{3}\left(H_{\beta}(\tau_{k'})\right)=\mathscr{C}\left(H_{\beta^{*}}(\tau_{k'})\right)I=-I$ with $H_{\beta}(-\tau_{k'})=H_{\beta}(\tau_{k'})$.
\end{proof}
In the following theorem we provide a sufficient condition for NEPS of $P_{3}$ to exhibit perfect state transfer. Later on we will generalize this theorem and at the end of this section we will provide the sufficient condition for an NEPS to be connected and allow perfect state transfer. Let us denote $U_{j}$ and $V_{j}$ to be the vertices of $NEPS\left(P_{3},\ldots,P_{3};\Omega\right)$ where $j$-th entry of $U_{j}$ and $V_{j}$ are $1$ and $3$, respectively, and the other remaining entries are $2$.
\begin{theorem}\label{f7}
Let $\Omega$ be a set of $n$-tuples such that for each $\beta=(\beta_{1},\beta_{2},\ldots,\beta_{n})\in\Omega$, $s(\beta)=k$ is fixed. Then the following holds for the graph $NEPS\left(P_{3},\ldots,P_{3};\Omega\right)$ at time $\tau_{k}$.
\begin{itemize}
\item[(I)] If $\sum\limits_{\beta\in\Omega}\beta_{j}\neq 0$ for some $j$ then the graph exhibits perfect state transfer between the pair of vertices $U_{j}$ and $V_{j}$.
\item[(II)] If $\sum\limits_{\beta\in\Omega}\beta_{j}= 0$ for some $j$ then the graph is periodic at the vertices $U_{j}$, $V_{j}$.
\item[(III)] The graph is periodic at the vertex $\left(2,\ldots,2\right)$.
\end{itemize}
\end{theorem}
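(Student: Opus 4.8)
The plan is to use Proposition \ref{a1} to write the transition matrix of $NEPS(P_{3},\ldots,P_{3};\Omega)$ at time $\tau_{k}$ as the product $H_{\Omega}(\tau_{k})=\prod_{\beta\in\Omega}H_{\beta}(\tau_{k})$, and then to extract the relevant matrix entries by tracking the action of $\mathscr{M}_{3}$ and $\mathscr{C}$ through this product. Since every $\beta\in\Omega$ has $s(\beta)=k$, Lemma \ref{b1} applies uniformly: each factor $H_{\beta}(\tau_{k})$ has $\mathscr{M}_{3}(H_{\beta}(\tau_{k}))$ equal to $-I$ or $-P$, both of which are unitary, so Proposition \ref{f5.1} gives $\mathscr{M}_{3}(H_{\Omega}(\tau_{k}))=\prod_{\beta\in\Omega}\mathscr{M}_{3}(H_{\beta}(\tau_{k}))$. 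Each factor in this product is $-I$ if $\beta_{n}=0$ and $-P$ if $\beta_{n}=1$; since $P^{2}=I$, the product equals $(-1)^{|\Omega|}P^{\sum_{\beta}\beta_{n}}$, which is $\pm I$ or $\pm P$ according to the parity of $\sum_{\beta\in\Omega}\beta_{n}$.

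The next step is to reduce the statements (I)--(III) about arbitrary coordinates $j$ to the $j=n$ case just handled. For a fixed $j$, let $\sigma$ be the permutation of $\{1,\ldots,n\}$ swapping $j$ and $n$; applying $\sigma$ to every vector of $\Omega$ produces a basis $\Omega'$ for which the $n$-th coordinate plays the role of the old $j$-th coordinate, and the corresponding NEPS is isomorphic to the original one via the graph automorphism permuting the tensor factors accordingly. Under this isomorphism $U_{j}\mapsto U_{n}$, $V_{j}\mapsto V_{n}$, and the center vertex $(2,\ldots,2)$ is fixed, so by Lemma \ref{f3.1} it suffices to prove everything for $j=n$. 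Now observe that $U_{n}$, $V_{n}$, and $(2,\ldots,2)$ are precisely the three vertices indexing the rows/columns of the central $3\times 3$ block, i.e. the block picked out by $\mathscr{M}_{3}$ (this follows from Proposition \ref{lb1} and the fact that the center vertex of each $P_{3}$-factor is vertex $2$, so the dictionary-order index $\frac{N+1}{2}$ corresponds to $(2,\ldots,2)$, and the neighbouring indices to $U_{n}$, $V_{n}$).

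With this identification, the three claims follow by reading off entries of $\mathscr{M}_{3}(H_{\Omega}(\tau_{k}))$. If $\sum_{\beta\in\Omega}\beta_{n}$ is odd, this block is $\pm P=\pm\left(\begin{smallmatrix}0&0&1\\0&1&0\\1&0&0\end{smallmatrix}\right)$, whose $(U_{n},V_{n})$-entry has modulus $1$, giving perfect state transfer between $U_{n}$ and $V_{n}$; this is (I). If $\sum_{\beta\in\Omega}\beta_{n}$ is even, the block is $\pm I$, so the $(U_{n},U_{n})$- and $(V_{n},V_{n})$-entries have modulus $1$, giving periodicity at $U_{n}$ and $V_{n}$; this is (II). For (III), the $(2,\ldots,2)$-diagonal entry is $\mathscr{C}(H_{\Omega}(\tau_{k}))$, and by Proposition \ref{lb1} applied factor-by-factor together with Lemma \ref{b1} (which shows $\mathscr{C}(H_{\beta}(\tau_{k}))=-1$ for every $\beta$, since the center of $-I$ and of $-P$ is $-1$), we get $\mathscr{C}(H_{\Omega}(\tau_{k}))=\prod_{\beta\in\Omega}\mathscr{C}(H_{\beta}(\tau_{k}))=(-1)^{|\Omega|}$, which has modulus $1$; this is (III).

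The main obstacle I anticipate is the bookkeeping in the reduction to $j=n$: one must verify carefully that permuting coordinates of $\Omega$ induces a genuine graph isomorphism of the NEPS (it does, because the NEPS adjacency matrix $\sum_{\beta}A_{1}^{\beta_{1}}\otimes\cdots\otimes A_{n}^{\beta_{n}}$ is conjugated by the tensor-factor permutation matrix into $\sum_{\beta}A_{\sigma(1)}^{\beta_{1}}\otimes\cdots$, and all factors are equal to $P_{3}$), and that this isomorphism carries the named vertices correctly. The rest is a direct application of Proposition \ref{f5.1}, Proposition \ref{lb1}, and Lemma \ref{b1}, with no computation heavier than $P^{2}=I$.
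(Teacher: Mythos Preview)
Your proposal is correct and follows essentially the same route as the paper: reduce to $j=n$ via the coordinate-swap isomorphism, then use Proposition~\ref{a1}, Lemma~\ref{b1}, and Proposition~\ref{f5.1} to compute $\mathscr{M}_{3}\big(H_{\Omega}(\tau_{k})\big)=(-1)^{|\Omega|}P^{r}$ and read off the three conclusions. One small slip: in part (III) you invoke Proposition~\ref{lb1} to get $\mathscr{C}\big(\prod_{\beta}H_{\beta}(\tau_{k})\big)=\prod_{\beta}\mathscr{C}\big(H_{\beta}(\tau_{k})\big)$, but Proposition~\ref{lb1} concerns Kronecker products, not matrix products; the identity you need does not hold in general. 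It is unnecessary anyway, since the $(2,2)$-entry of $(-1)^{|\Omega|}P^{r}$ is $(-1)^{|\Omega|}$ for every $r$, which is exactly how the paper argues (III).
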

\begin{proof}
Consider $\Omega_{j}=\left\lbrace \delta(\beta)=\left(\beta_{1},\ldots, \beta_{n}, \ldots, \beta_{j}\right): \beta\in\Omega \right\rbrace $ \emph{i.e}, each $\delta(\beta)\in\Omega_{j}$ is defined by interchanging the $j$-th entry and $n$-th entry of $\beta\in\Omega$. Now both the graphs $G:=NEPS\left(P_{3},\ldots,P_{3};\Omega\right)$ and\linebreak $G_{j}:=NEPS\left(P_{3},\ldots,P_{3};\Omega_{j}\right)$ have the common vertex set $V(P_{3})\times  \ldots\times V(P_{3})$. It is easy to check that the map $f:V(G)\rightarrow V(G_{j})$ defined by $f(v_{1},\ldots,v_{j},\ldots,v_{n})=(v_{1},\ldots,v_{n},\ldots,v_{j})$ is an isomorphism. Thus perfect state transfer occurs between $U_{j}$ and $V_{j}$ in $G$ iff perfect state transfer occurs between $U_{n}$ and $V_{n}$ in $G_{j}$. Similarly, the graph $G$ is periodic at the vertices $U_{j}$, $V_{j}$ iff $G_{j}$ is periodic at the vertices $U_{n}$, $V_{n}$. Note that if $\sum\limits_{\beta\in\Omega}\beta_{j}\neq 0$ in $G$ then $\sum\limits_{\delta(\beta)\in\Omega_{j}}\delta(\beta)_{n}\neq 0$ in $G_{j}$. Similarly, if $\sum\limits_{\beta\in\Omega}\beta_{j}= 0$ in $G$ then $\sum\limits_{\delta(\beta)\in\Omega_{j}}\delta(\beta)_{n}=0$ in $G_{j}$. Thus it is enough to prove the result for $j=n$. Let $r$ be the numbers of $\beta\in\Omega$ for which $\beta_{n}=1$. By Lemma \ref{b1}, $\mathscr{M}_{3}\left(H_{\beta}(\tau_{k})\right)$ is equal to $-I$ or $-P$ according as $\beta_{n}$ is $0$ or $1$. So $\mathscr{M}_{3}\left(H_{\beta}(\tau_{k})\right)$ is unitary for all $\beta\in\Omega$. Let $H_{\Omega}(t)$ be the transition matrix for $NEPS\left(P_{3},\ldots,P_{3};\Omega\right)$. By Proposition \ref{f5.1} and Proposition \ref{a1}, we have
\begin{eqnarray}
\mathscr{M}_{3}\left(H_{\Omega}(\tau_{k})\right)=\mathscr{M}_{3}\left(\prod\limits_{\beta\in\Omega}H_{\beta}(\tau_{k})\right)
&=&\prod\limits_{\beta\in\Omega}\mathscr{M}_{3}\left(H_{\beta}(\tau_{k})\right) \nonumber\\
&=&(-1)^{m}P^{r}, \text{ where $m=|\Omega|$.}
\end{eqnarray}
Note that first row of $\mathscr{M}_{3}\left(H_{\Omega}(\tau)\right)$, $\tau\in\Rl$ corresponds to the $\frac{3^{n}-1}{2}$-th row of $H_{\Omega}(\tau)$. Also there are $3^{n-1}$ vertices preceding to the vertex $\left(2,1,\ldots,1\right)$ in dictionary ordering. Thus the position of the row in $H_{\Omega}(\tau)$ corresponding to the vertex $\left(2,\ldots,2,1\right)$ is $3^{n-1}+3^{n-2}+\ldots +3^{1}+1=\frac{\left(3^{n}-1\right)}{2}$. Smilarly, the position of the column in $H_{\Omega}(\tau)$ corresponding to the vertex $\left(2,\ldots,2,3\right)$ is $\frac{\left(3^{n}-1\right)}{2}+2=\frac{\left(3^{n}+3\right)}{2}$ as $\left(2,\ldots,2,1\right)$, $\left(2,\ldots,2,2\right)$ and $\left(2,\ldots,2,3\right)$ are consecutive vertices in dictionary ordering. Thus $(1,3)$-th entry of $\mathscr{M}_{3}\left(H_{\Omega}(\tau)\right)$ is actually the $U_{n}V_{n}$-th entry of $H_{\Omega}(\tau)$.
\par $(I)$ If $\sum\limits_{\beta\in\Omega}\beta_{n}\neq 0$ then $r$ is odd. As $P^{2}=I$, using $(2)$ we have $(1,3)$-th entry of $\mathscr{M}_{3}\left(H_{\Omega}(\tau_{k})\right)$ is $(-1)^{m}$. Hence perfect state transfer occurs between the vertices $U_{n}$ and $V_{n}$ at time $\tau_{k}$.
\par $(II)$ If $\sum\limits_{\beta\in\Omega}\beta_{n}= 0$ then $r$ is even. Thus using $(2)$, we have $\mathscr{M}_{3}\left(H_{\Omega}(\tau_{k})\right)=(-1)^{m}I$ as $P^{2}=I$. Now $(1,1)$ and $(3,3)$-th entry of $\mathscr{M}_{3}\left(H_{\Omega}(\tau_{k})\right)$ correspond to $U_{n}U_{n}$, $V_{n}V_{n}$-th entry of $H_{\Omega}(\tau_{k})$. Thus the graph is periodic at the vertices $U_{n}$, $V_{n}$ at time $\tau_{k}$.
\par $(III)$ In both the cases $(I)$ and $(II)$, the $(2,2)$-th entry of $\mathscr{M}_{3}\left(H_{\Omega}(\tau_{k})\right)$ is $(-1)^{m}$. Hence the graph is periodic at the vertex $\left(2,\ldots,2\right)$ at time $\tau_{k}$.
\end{proof}
Let $J$ be the all $1$ matrix of order $n$ and let $I$ be the identity matrix of same order. The following example shows that the graph $NEPS\left(P_{3},\ldots,P_{3};J-I\right)$ exhibits perfect state transfer when $n$ is even. We will find later that this graph is indeed connected.
\begin{ex}
Let us consider the NEPS of $P_{3}$ with the basis $\Omega=J-I$. Observe that $s(\beta)=n-1$ for each $\beta\in\Omega$. Also we have $\sum\limits_{\beta\in\Omega}\beta=(1,\ldots,1)$ or $(0,\ldots,0)$ according as $n$ is even or odd. By using Theorem \ref{f7} we see that when $n$ is even, the graph admits perfect state transfer at time $\tau_{n-1}=\frac{\pi}{(\sqrt{2})^{n-1}}$ between $U_{j}$ and $V_{j}$ for each $1\leq j\leq n$. Again if $n$ is odd then the graph is periodic at time $\tau_{n-1}=\frac{\pi}{(\sqrt{2})^{n-1}}$ at the vertices $U_{j}$, $V_{j}$ for each $1\leq j\leq n$. In any case the graph is periodic at the vertex $(2,\ldots,2)$.
\end{ex}
Thus we can construct many graphs allowing perfect state transfer. The following result which is indeed proved in \cite{chr1, chr2}, can also be obtained as a corollary of Theorem \ref{f7}.
\begin{cor}\cite{chr1, chr2}\label{f7.1}
Cartesian product of $n$ copies of the path $P_{3}$ exhibits perfect state transfer at time $\frac{\pi}{\sqrt{2}}$ between the pair vertices $U_{j}$ and $V_{j}$ for $1\leq j\leq n$.  
\end{cor}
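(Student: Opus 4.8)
The plan is to realize the Cartesian product $P_{3}\,\square\,\cdots\,\square\,P_{3}$ ($n$ copies) as a NEPS of $P_{3}$ and then invoke Theorem \ref{f7}. First I would observe that two vertices $(x_{1},\ldots,x_{n})$ and $(y_{1},\ldots,y_{n})$ of the iterated Cartesian product are adjacent precisely when they agree in every coordinate but one, and in the remaining coordinate $x_{i}$ is adjacent to $y_{i}$ in $P_{3}$. Comparing this with the definition of NEPS, this is exactly the adjacency relation of $NEPS\left(P_{3},\ldots,P_{3};\Omega\right)$ with basis $\Omega=\left\lbrace e_{1},\ldots,e_{n}\right\rbrace$, where $e_{i}\in\Zl_{2}^{n}$ is the vector with $1$ in the $i$-th position and $0$ elsewhere. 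So the Cartesian product of $n$ copies of $P_{3}$ is this particular NEPS of $P_{3}$.

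Next I would check that this $\Omega$ satisfies the hypotheses of Theorem \ref{f7}. Each $\beta=e_{i}\in\Omega$ has exactly one non-zero entry, so $s(\beta)=1$ is constant over $\Omega$; thus $k=1$ and the relevant time is $\tau_{1}=\frac{\pi}{\sqrt{2}}$. Moreover, for each fixed $j$ the $j$-th coordinate sum $\sum_{\beta\in\Omega}\beta_{j}$ counts the vectors $e_{i}$ with a $1$ in position $j$; the only such vector is $e_{j}$, so the sum equals $1\neq 0$ in $\Zl_{2}$.

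Finally I would apply part $(I)$ of Theorem \ref{f7}: since $\sum_{\beta\in\Omega}\beta_{j}\neq 0$ for every $j$, the graph $NEPS\left(P_{3},\ldots,P_{3};\Omega\right)$ exhibits perfect state transfer between $U_{j}$ and $V_{j}$ at time $\tau_{1}=\frac{\pi}{\sqrt{2}}$ for each $1\leq j\leq n$, which is exactly the stated claim. There is essentially no obstacle here; the only point requiring a little care is the bookkeeping in the first step, namely verifying that the adjacency relation of the iterated Cartesian product coincides with the NEPS adjacency relation for the standard-basis choice of $\Omega$, and this is immediate from the two definitions.
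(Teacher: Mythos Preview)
Your proof is correct and follows essentially the same approach as the paper: identify the Cartesian product of $n$ copies of $P_{3}$ as $NEPS(P_{3},\ldots,P_{3};\Omega)$ with $\Omega=I$ (the standard basis vectors), observe that $s(\beta)=1$ for every $\beta\in\Omega$ and that $\sum_{\beta\in\Omega}\beta=(1,\ldots,1)$, and then apply Theorem~\ref{f7}(I) at time $\tau_{1}=\frac{\pi}{\sqrt{2}}$.
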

\begin{proof}
Cartesian product of $n$ copies of the path $P_{3}$ is actually NEPS with basis $\Omega=I$ \cite{cev}, where $I$ is the identity matrix of order $n$. It is easy to verify that for $\beta\in\Omega$, $s(\beta)=1$ and $\sum\limits_{\beta\in\Omega}\beta=(1,\ldots,1)$. By Theorem \ref{f7}, the graph admits perfect state transfer at time $\tau_{1}=\frac{\pi}{\sqrt{2}}$ between $U_{j}$ and $V_{j}$ for each $1\leq j\leq n$.
\end{proof}
We now extend Theorem \ref{f7} to construct more general NEPS of the path $P_{3}$ exhibiting perfect state transfer. In the next result we show that, for a certain type of NEPS of $P_{3}$, the transition matrix for a spanning subgraph is same as that of the whole graph at a fixed time $\tau$ (say). Now if the spanning subgraph admits perfect state transfer at time $\tau$ then so does the graph itself between the same pair of vertices. That is, in some sense, adding extra edges (following certain rules) do not disturb the property of perfect state transfer in some NEPS of $P_{3}$.  
\begin{theorem}\label{f8}
Let $\Omega$ be a set of $n$-tuples such that $s(\beta)$ is even (or odd) for all $\beta\in\Omega$. If $k=\min\limits_{\beta\in\Omega} s(\beta)$ and $\Omega^{*}=\left\lbrace\beta\in\Omega : s(\beta)=k \right\rbrace $ then $H_{\Omega}(\tau_{k})=H_{\Omega^{*}}(\tau_{k})$, where $H_{\Omega}(t)$ and $H_{\Omega^{*}}(t)$ are the transition matrices for the NEPS of $P_{3}$ corresponding to $\Omega$ and $\Omega^{*}$, respectively.
\end{theorem}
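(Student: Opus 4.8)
The plan is to combine the factorization of the transition matrix from Proposition \ref{a1} with a short eigenvalue computation showing that every ``extra'' factor of $H_\Omega(\tau_k)$ evaluates to the identity matrix.

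By Proposition \ref{a1} we have $H_\Omega(\tau_k)=\prod_{\beta\in\Omega}H_\beta(\tau_k)$ and $H_{\Omega^{*}}(\tau_k)=\prod_{\beta\in\Omega^{*}}H_\beta(\tau_k)$, and (as observed in the proof of Proposition \ref{a1}) the matrices $H_\beta(\tau_k)$ pairwise commute. Hence it suffices to prove that $H_\beta(\tau_k)=I$ for every $\beta\in\Omega\setminus\Omega^{*}$, for then
\[
H_\Omega(\tau_k)=\Big(\prod_{\beta\in\Omega^{*}}H_\beta(\tau_k)\Big)\Big(\prod_{\beta\in\Omega\setminus\Omega^{*}}H_\beta(\tau_k)\Big)=H_{\Omega^{*}}(\tau_k).
\]
So fix $\beta\in\Omega\setminus\Omega^{*}$ and set $k'=s(\beta)$. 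Since every $s(\gamma)$, $\gamma\in\Omega$, has the same parity as $k$ and $k'>k$, the integer $k'-k$ is even and positive. The adjacency matrix of $NEPS(P_{3},\ldots,P_{3};\{\beta\})$ is $A_\beta=A_{1}^{\beta_{1}}\otimes\cdots\otimes A_{n}^{\beta_{n}}$, where $A_{j}$ is the adjacency matrix of $P_{3}$, so $A_{j}^{\beta_{j}}$ equals $A_{j}$ (with eigenvalues $-\sqrt2,0,\sqrt2$) when $\beta_{j}=1$ and the $3\times3$ identity when $\beta_{j}=0$. Consequently every eigenvalue $\mu$ of $A_\beta$ is a product of $k'$ numbers from $\{-\sqrt2,0,\sqrt2\}$ together with $n-k'$ ones, hence $\mu=0$ or $\mu=\pm(\sqrt2)^{k'}$. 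In either case $\exp(-i\tau_k\mu)=1$: this is trivial for $\mu=0$, and for $\mu=\pm(\sqrt2)^{k'}$, using $\tau_k=\pi/(\sqrt2)^{k}$,
\[
\exp(-i\tau_k\mu)=\exp\big(\mp i\pi(\sqrt2)^{k'-k}\big)=\exp\big(\mp i\pi\,2^{(k'-k)/2}\big)=1,
\]
because $2^{(k'-k)/2}$ is an even integer. Writing $A_\beta=\sum_{r}\mu_{r}G_{r}$ for its spectral decomposition and using $\sum_r G_r=I$, we get $H_\beta(\tau_k)=\sum_{r}\exp(-i\tau_k\mu_{r})G_{r}=\sum_{r}G_{r}=I$, as required.

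The computation itself is elementary; the one step that genuinely matters is the use of the parity hypothesis, which is precisely what forces $(\sqrt2)^{k'-k}$ to be an integer --- in fact an even integer --- so that the relevant exponentials equal $+1$ rather than either an irrational-argument exponential or $-1$. If some $s(\beta)$ differed from $k$ by an odd amount, $H_\beta(\tau_k)$ would in general not be the identity, so this parity check is the crux of the argument and cannot be bypassed.
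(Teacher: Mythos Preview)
Your proof is correct and follows the same overall structure as the paper's: factor $H_\Omega(\tau_k)$ via Proposition~\ref{a1} and show that each factor $H_\beta(\tau_k)$ with $\beta\in\Omega\setminus\Omega^{*}$ equals the identity. The difference is in how that last step is carried out. The paper appeals to Lemma~\ref{b1}, which (as a by-product of its inductive proof) gives $H_\beta(-\tau_{k'})=H_\beta(\tau_{k'})$, hence $H_\beta(2\tau_{k'})=I$; it then writes $\tau_k=2m_{k'}\tau_{k'}$ with $m_{k'}=2^{(k'-k)/2-1}\in\Nl$ and concludes $H_\beta(\tau_k)=\left[H_\beta(2\tau_{k'})\right]^{m_{k'}}=I$. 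You instead compute the spectrum of $A_\beta$ directly---it is contained in $\{0,\pm(\sqrt2)^{k'}\}$---and check $\exp(-i\tau_k\mu)=1$ for every eigenvalue. Your route is somewhat more self-contained, since it never touches Lemma~\ref{b1} and makes the role of the parity hypothesis transparent at the level of the spectrum; the paper's route hides the eigenvalue bookkeeping inside Lemma~\ref{b1}, which it needs anyway for Theorem~\ref{f7}.
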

\begin{proof}
Assume that $\beta\in\Omega\setminus\Omega^{*}$ and $s(\beta)=k'$. So by our assumption $k'-k(\neq 0)$ is even and this implies that $\tau_{k}=\frac{\pi}{(\sqrt{2})^{k}}=(\sqrt{2})^{k'-k}\frac{\pi}{(\sqrt{2})^{k'}}=2^{\frac{k'-k}{2}}\tau_{k'}=2m_{k'}\tau_{k'}$ where $m_{k'}$ is a positive integer. By Lemma \ref{b1} we have $H_{\beta}(-\tau_{k'})=H_{\beta}(\tau_{k'})$ \emph{i.e.} $H_{\beta}(2\tau_{k'})=I$. Now for each $\beta\in\Omega\setminus\Omega^{*}$, $H_{\beta}(\tau_{k})=H_{\beta}(2m_{k'}\tau_{k'})=\left[H_{\beta}(2\tau_{k'})\right]^{m_{k'}}=I$. Thus we have by Proposition \ref{a1}
\begin{eqnarray*}
H_{\Omega}(\tau_{k})=\prod\limits_{\beta\in\Omega}H_{\beta}(\tau_{k})
& = &\prod\limits_{\beta\in\Omega^{*}}H_{\beta}(\tau_{k})\prod\limits_{\beta\in\Omega\setminus\Omega^{*}}H_{\beta}(\tau_{k})\\
& = &\prod\limits_{\beta\in\Omega^{*}}H_{\beta}(\tau_{k})\\
& = & H_{\Omega^{*}}(\tau_{k}).
\end{eqnarray*}
Hence the theorem follows.
\end{proof}
\textbf{Remark:} If $\sum\limits_{\beta\in\Omega^{*}}\beta\neq\mathbf{0}$ then by Theorem \ref{f7}, the graph $NEPS(P_{3},\ldots,P_{3};\Omega^{*})$ admits perfect state transfer. Therefore the graph $NEPS(P_{3},\ldots,P_{3};\Omega)$ also exhibit perfect state transfer between same pair of vertices as in $NEPS(P_{3},\ldots,P_{3};\Omega^{*})$.
\begin{ex}
Consider $G:=NEPS\left(P_{3},P_{3},P_{3};\Omega\right)$ where $\Omega=\lbrace\left(1,0,0\right),\left(0,1,0\right),\left(0,0,1\right),\left(1,1,1\right)\rbrace.$ Note that the number of non-zero entries in each tuple contained in $\Omega$ is odd and the minimum of those numbers is $1$. For $\Omega^{*}=\lbrace\left(1,0,0\right),\left(0,1,0\right),\left(0,0,1\right)\rbrace$, the graph $G':=NEPS\left(P_{3},P_{3},P_{3};\Omega^{*}\right)$ is actually the Cartesian product of $P_{3}$. It has already been shown in \cite{chr1, chr2} that Cartesian product of any copies of $P_{3}$ exhibits perfect state transfer at $\frac{\pi}{\sqrt{2}}$. Finally, Theorem \ref{f8} implies that $G$ admits perfect state transfer at time $\frac{\pi}{\sqrt{2}}$ between the same pair of vertices as in $G'$.
\end{ex}
We generalize this observation as a corollary which is a direct consequence of the Theorem \ref{f8}. From this corollary, we can construct many NEPS having perfect state transfer.
\begin{cor}
Let $\Omega$ be the set of $n$-tuples containing all the rows of the identity matrix of order $n$. Consider $\Omega'=\left\lbrace \beta\in\Zl_{2}^{n} :  s(\beta) \text{ is odd and } s(\beta)\neq 1\right\rbrace$. Then the graph $NEPS\left(P_{3},\ldots,P_{3};S\cup\Omega\right)$ exhibits perfect state transfer between the same pair of vertices as in $NEPS\left(P_{3},\ldots,P_{3};\Omega\right)$, for any $S\subseteq\Omega'$.
\end{cor}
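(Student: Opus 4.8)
The plan is to obtain this corollary as an immediate application of Theorem~\ref{f8} together with Corollary~\ref{f7.1}. First I would verify that the basis $S\cup\Omega$ meets the hypothesis of Theorem~\ref{f8}, namely that $s(\beta)$ has fixed parity over the whole basis. Every row of the order-$n$ identity matrix has exactly one nonzero entry, so $s(\beta)=1$ is odd for $\beta\in\Omega$; and every $\beta\in S\subseteq\Omega'$ has $s(\beta)$ odd by the definition of $\Omega'$. Hence $s(\beta)$ is odd for all $\beta\in S\cup\Omega$, which is the "odd" case of Theorem~\ref{f8}.

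Next I would pin down the minimal stratum $(S\cup\Omega)^{*}$. Since $\Omega'$ contains only tuples with $s(\beta)\neq 1$, we have $S\cap\Omega=\emptyset$, and the rows $e_{1},\dots,e_{n}$ of the identity matrix are precisely the $n$ tuples in $\Zl_{2}^{n}$ of weight one. Consequently $k:=\min_{\beta\in S\cup\Omega}s(\beta)=1$ and $(S\cup\Omega)^{*}=\{\beta\in S\cup\Omega:s(\beta)=1\}=\Omega$. Feeding this into Theorem~\ref{f8} yields $H_{S\cup\Omega}(\tau_{1})=H_{\Omega}(\tau_{1})$, where $\tau_{1}=\tfrac{\pi}{\sqrt{2}}$ and $H_{S\cup\Omega}(t)$, $H_{\Omega}(t)$ denote the transition matrices of the two NEPS.

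Finally, $NEPS(P_{3},\dots,P_{3};\Omega)$ is exactly the Cartesian product of $n$ copies of $P_{3}$, which by Corollary~\ref{f7.1} exhibits perfect state transfer at time $\tau_{1}$ between $U_{j}$ and $V_{j}$ for each $1\leq j\leq n$; in other words, the $U_{j}V_{j}$-entries of $H_{\Omega}(\tau_{1})$ have unit modulus. Because $H_{S\cup\Omega}(\tau_{1})$ equals $H_{\Omega}(\tau_{1})$ entrywise, the graph $NEPS(P_{3},\dots,P_{3};S\cup\Omega)$ has the same unit-modulus entries at the same positions, so it exhibits perfect state transfer between the same pairs $U_{j},V_{j}$ at time $\tau_{1}$. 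This is precisely the assertion of the corollary (it is also the content of the Remark following Theorem~\ref{f8}, specialized to $\Omega^{*}=\Omega$, for which $\sum_{\beta\in\Omega}\beta=(1,\dots,1)\neq\o$).

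I do not expect a genuine obstacle here: the substantive work is already carried out in Lemma~\ref{b1} and Theorem~\ref{f8}. The only step needing a moment's attention is the combinatorial bookkeeping that the identity rows $\Omega$ coincide with the full set of weight-one tuples, so that $(S\cup\Omega)^{*}$ is exactly $\Omega$ rather than a larger set; once this is noted, the conclusion follows immediately from the two cited results with no further computation.
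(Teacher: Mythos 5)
Your proof is correct and follows exactly the route the paper intends: the paper states this corollary without proof as a "direct consequence of Theorem \ref{f8}" (cf.\ the Remark after that theorem and the preceding example), and your verification that all weights in $S\cup\Omega$ are odd, that $(S\cup\Omega)^{*}=\Omega$ with $k=1$, and the appeal to Corollary \ref{f7.1} is precisely the omitted bookkeeping.
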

The theory of perfect state transfer is considered only on connected graphs but the NEPS in Theorem \ref{f7} and Theorem \ref{f8} may not always be connected. The following results by Dragan Stevanovi\'c determines when a NEPS of $P_{3}$ is connected.

\begin{lem}\cite{stev1}\label{s1}
Let $B_{1},\ldots, B_{n}$ be connected bipartite graphs and let $C_{1},\ldots, C_{m}$ be connected non-bipartite graphs. Then $G=NEPS(B_{1},\ldots, B_{n},C_{1},\ldots, C_{m};\mathscr{B})$ has the same number of components as $G'=NEPS(B_{1},\ldots, B_{n};\mathscr{B}')$, where $\mathscr{B}'$ consists of the columns of $\mathscr{B}$ corresponding to the bipartite graphs. If $C$ is the vertex set of a component of $G'$ then $C\times V(C_{1})\times\ldots\times V(C_{m})$ is the vertex set of a component of $G$.
\end{lem}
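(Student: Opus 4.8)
The plan is to argue entirely with walks, using the combinatorial description of a NEPS. Two vertices lie in the same component precisely when a walk joins them, and a walk of length $L$ amounts to a sequence $\beta^{(1)},\dots,\beta^{(L)}\in\mathscr{B}$ together with a choice, in every factor coordinate $i$ with $\beta^{(t)}_{i}=1$, of an incident edge of that factor; the projection of such a walk to coordinate $i$ is itself a walk of length $\sum_{t}\beta^{(t)}_{i}$ in the $i$-th factor. Let $\pi\colon V(G)\to V(G')$ be the projection that deletes the coordinates belonging to $C_{1},\dots,C_{m}$. If $x\sim y$ in $G$ via $\beta\in\mathscr{B}$, then the bipartite part $\beta'$ of $\beta$ is either $\mathbf{0}$, so $\pi(x)=\pi(y)$, or it is one of the rows of $\mathscr{B}'$, so $\pi(x)\sim\pi(y)$ in $G'$. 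Hence $\pi$ sends walks to walks, so it carries each component of $G$ into a single component of $G'$, and being onto it induces a surjection from the components of $G$ onto those of $G'$. Moreover an edge of $G$ with one endpoint in $\pi^{-1}(C)$ has its other endpoint there too, so $\pi^{-1}(C)$ is a union of components of $G$; thus the whole statement reduces to showing that $\pi^{-1}(C)=C\times V(C_{1})\times\dots\times V(C_{m})$ is \emph{connected} in $G$ for every component $C$ of $G'$.

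For this, fix $x=(\bar x,u)$ and $y=(\bar y,v)$ in $\pi^{-1}(C)$, so $\bar x,\bar y\in C$. First take a walk in $G'$ from $\bar x$ to $\bar y$, given by rows $\gamma_{1},\dots,\gamma_{L}$ of $\mathscr{B}'$ and edge choices in the bipartite factors; lift each $\gamma_{t}$ to a $\beta^{(t)}\in\mathscr{B}$ with bipartite part $\gamma_{t}$, reuse the same edges in the bipartite factors, and in each non-bipartite factor in which $\beta^{(t)}$ has a $1$ pick any incident edge, which is possible since each $C_{j}$, being connected, has no isolated vertex. Running this walk in $G$ from $x$ reaches $(\bar y,w)$ for some $w=(w_{1},\dots,w_{m})$, because the bipartite coordinates retrace exactly the chosen walk of $G'$. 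It therefore suffices to join $(\bar y,w)$ to $(\bar y,v)$ inside $G$, i.e.\ to move the non-bipartite coordinates from $w$ to $v$ while leaving the bipartite coordinates where they are. I would do this one non-bipartite factor at a time: choose a row $\beta\in\mathscr{B}$ that is $1$ in the $C_{j}$-coordinate, and use the step $\beta$ repeated $N$ times, where in the $C_{j}$-coordinate we trace a length-$N$ walk from $w_{j}$ to $v_{j}$ and in every other coordinate $i$ with $\beta_{i}=1$ we trace a closed walk of length $N$ at the vertex currently occupied there; iterating over $j=1,\dots,m$ then carries $(\bar y,w)$ to $(\bar y,v)=y$.

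The step I expect to be the main obstacle is making that last construction precise, because it rests on two standard facts and one non-degeneracy hypothesis. The facts: a connected non-bipartite graph has, between any ordered pair of vertices, a walk of every sufficiently large length (an odd closed walk lets one switch parity), while a connected bipartite graph has, at every vertex, a closed walk of every sufficiently large even length; the bookkeeping is then to pick one large even $N$ that simultaneously realizes the required $w_{j}\to v_{j}$ walk in $C_{j}$, closed even-length walks at the relevant vertices of all bipartite factors touched by $\beta$, and closed walks in the other non-bipartite factors touched by $\beta$ — all available once $N$ is even and large. The hypothesis: for each $j$ one needs a row of $\mathscr{B}$ that is nonzero in the $C_{j}$-coordinate, since a zero column would freeze that factor and make $G$ have strictly more components than $G'$; this non-degeneracy must be part of the standing assumptions on the NEPS and should be invoked here. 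Granting all this, $\pi^{-1}(C)$ is connected, hence is a single component equal to $C\times V(C_{1})\times\dots\times V(C_{m})$, and since distinct components of $G'$ have disjoint preimages that cover $V(G)$, the graphs $G$ and $G'$ have the same number of components.
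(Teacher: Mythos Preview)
The paper does not prove this lemma: it is quoted from Stevanovi\'c~\cite{stev1} and used as a black box, so there is no in-paper argument to compare yours against.

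Taken on its own, your combinatorial walk argument is sound. The reduction to showing each fibre $\pi^{-1}(C)$ is connected is correct, and the two-stage construction---first lift a $G'$-walk to reach $(\bar y,w)$, then repair the non-bipartite coordinates one at a time by repeating a single $\beta$-step $N$ times for a large even $N$---works for exactly the parity reasons you give: a connected non-bipartite graph has walks of every sufficiently large length between any ordered pair, while a connected bipartite graph with at least one edge has closed walks of every even length at every vertex, so a common even $N$ can be chosen. Your iteration over $j$ is also fine, since coordinates fixed in earlier stages only perform closed walks in later stages and therefore stay put.

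The non-degeneracy hypothesis you flag---that each $C_{j}$-column of $\mathscr{B}$ be nonzero---is not a technicality but a genuine requirement: if that column is zero the $C_{j}$-coordinate is frozen in $G$ and the component count is multiplied by $|V(C_{j})|$, so the lemma is simply false. In \cite{stev1} this is handled by the standing convention that a NEPS basis has no zero columns (a zero column would make the corresponding factor redundant); you should state this hypothesis explicitly rather than leave it to convention.
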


\begin{theorem}\cite{stev1}\label{s2}
Let $B_{1},\ldots, B_{n}$ be connected bipartite graphs then $G=NEPS(B_{1},\ldots, B_{n}; \mathscr{B})$ is connected if and only if $r(\mathscr{B})=n$.
\end{theorem}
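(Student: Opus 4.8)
The plan is to prove both directions by exploiting the canonical $2$-colouring of the bipartite factors. First I would fix a bipartition $\left(X_i,Y_i\right)$ of each $B_i$ and let $\sigma_i\colon V(B_i)\to\Zl_2$ be the indicator of $Y_i$, then combine these into a map $\sigma(v_1,\ldots,v_n)=\left(\sigma_1(v_1),\ldots,\sigma_n(v_n)\right)\in\Zl_2^n$. The key observation is that traversing an edge of $G=NEPS\left(B_1,\ldots,B_n;\mathscr{B}\right)$ labelled by $\beta\in\mathscr{B}$ changes the value of $\sigma$ by exactly $\beta$: in a coordinate $i$ with $\beta_i=0$ the vertex stays put, and in a coordinate with $\beta_i=1$ it moves to a neighbour in $B_i$ and hence flips colour. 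Consequently, if two vertices lie in one component of $G$ then the difference of their $\sigma$-values is a $\Zl_2$-linear combination of the rows of $\mathscr{B}$, i.e.\ it lies in the row space $W$ of $\mathscr{B}$. Since each $B_i$ is a non-trivial connected bipartite graph, $\sigma$ is surjective onto $\Zl_2^n$, so if $G$ is connected then $W=\Zl_2^n$, that is $r(\mathscr{B})=n$; this disposes of the forward implication.

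For the converse I would first reduce to the case of an invertible square basis: choosing $n$ linearly independent rows of $\mathscr{B}$ yields a sub-basis $\mathscr{B}'$ for which $NEPS\left(B_1,\ldots,B_n;\mathscr{B}'\right)$ is a spanning subgraph of $G$, so it suffices to show connectedness when $\mathscr{B}$ is an $n\times n$ matrix invertible over $\Zl_2$. Fixing a base vertex $a=(a_1,\ldots,a_n)$ and an arbitrary target $v=(v_1,\ldots,v_n)$, I would construct a walk from $a$ to $v$ move by move. A walk in $G$ is a sequence of moves, each labelled by a row of $\mathscr{B}$, and within one move one may choose, independently in every coordinate $i$ with $\beta_i=1$, an arbitrary neighbour of the current vertex in $B_i$. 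Hence if the $j$-th row is used $c_j$ times in total (in any order), coordinate $i$ undergoes exactly $q_i=\sum_j c_j\beta_{j,i}$ edge-steps in $B_i$, and one may prescribe freely the walk that these steps trace, as long as it has length $q_i$ and runs from $a_i$ to $v_i$; in a connected bipartite graph with an edge such a walk exists precisely when $q_i\ge \mathrm{dist}_{B_i}(a_i,v_i)$ and $q_i\equiv \mathrm{dist}_{B_i}(a_i,v_i)\pmod 2$.

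It then remains to choose the multiplicities $c_j\ge 0$ satisfying all of these conditions simultaneously. Let $d\in\Zl_2^n$ record the parities $\mathrm{dist}_{B_i}(a_i,v_i)\bmod 2$. Because $\mathscr{B}^{\mathsf T}$ is invertible over $\Zl_2$, I can solve $\mathscr{B}^{\mathsf T}\bar c=d$ for $\bar c\in\{0,1\}^n$ and then set $c_j=\bar c_j+2N$ for a large integer $N$: the parities of the $q_i$ are unaffected, while $q_i\ge 2N\cdot(\text{$i$-th column sum of }\mathscr{B})\to\infty$ since an invertible $\mathscr{B}$ has no zero column, so all the lower bounds $q_i\ge\mathrm{dist}_{B_i}(a_i,v_i)$ hold once $N$ is large. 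Realising the prescribed walks coordinate by coordinate (the neighbour choices in distinct coordinates being independent) then produces a walk from $a$ to $v$ in $G$, whence $G$ is connected.

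The hard part is the converse, and inside it the step of meeting the parity conditions $q_i\equiv\mathrm{dist}_{B_i}(a_i,v_i)\pmod 2$ together with the distance lower bounds at once: invertibility of $\mathscr{B}$ over $\Zl_2$ is exactly what makes the parity vector $d$ attainable, and the freedom to add an even number of extra copies of each move (harmless because an invertible $\mathscr{B}$ has no zero column) is what absorbs the distance lower bounds. I would also take care over the bookkeeping showing that the free neighbour choices in different coordinates during each move are genuinely independent, so that the walks chosen separately in the factors really do assemble into a single walk of $G$.
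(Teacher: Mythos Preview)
The paper does not itself prove this statement: Theorem~\ref{s2} is quoted from Stevanovi\'c~\cite{stev1} and used as a black box, so there is no in-paper argument to compare yours against.

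On its own merits your proposal is correct. The forward implication via the colouring map $\sigma:V(G)\to\Zl_2^n$ is the standard obstruction argument and is fine once one assumes (as is implicit in the theorem) that each $B_i$ has at least one edge, so that both colour classes are non-empty and $\sigma$ is onto. For the converse, the reduction to an invertible square $\mathscr{B}$ via a spanning sub-NEPS is legitimate, and the walk-construction works: the crucial facts are (i) in a single NEPS edge labelled $\beta$ the neighbour chosen in each coordinate with $\beta_i=1$ is independent of the others, so prescribing the factor-walks separately is allowed; (ii) a connected bipartite graph with an edge has an $a_i$--$v_i$ walk of every length $q_i\ge\mathrm{dist}_{B_i}(a_i,v_i)$ with $q_i\equiv\mathrm{dist}_{B_i}(a_i,v_i)\pmod 2$; and (iii) invertibility of $\mathscr{B}^{\mathsf T}$ over $\Zl_2$ lets you realise the parity vector $d$, while replacing $\bar c_j$ by $\bar c_j+2N$ preserves those parities and pushes each $q_i$ past the distance threshold because an invertible $\mathscr{B}$ has no zero column. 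All three points are sound, so the argument goes through.
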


The graph $P_{3}$ is connected and also it is a bipartite graph. So if we impose this extra condition $r(\Omega)=n$ in Theorem \ref{f7} and Theorem \ref{f8} then the NEPS of $P_{3}$ corresponding to $\Omega$ is connected and exhibits perfect state transfer. The following theorem ensures that such graphs exist.

\begin{theorem}
For every $n\in \Nl\setminus \left\lbrace 1\right\rbrace$ and any odd positive integer $k< n$, there is a basis $\Omega$ such that $NEPS(P_{3},\ldots, P_{3};\Omega)$ is connected and exhibits perfect state transfer at time $\tau_{k}=\frac{\pi}{(\sqrt{2})^{k}}$.
\end{theorem}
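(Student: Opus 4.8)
The plan is to reduce the statement, via the results already established, to a purely linear-algebraic construction over $\Zl_2$. By Theorem \ref{f7}(I), if $\Omega$ is a set of $n$-tuples in which every $\beta$ has the same weight $s(\beta)=k$ and $\sum_{\beta\in\Omega}\beta\neq\o$ in $\Zl_2^n$, then $NEPS(P_3,\ldots,P_3;\Omega)$ exhibits perfect state transfer at time $\tau_k$; and since $P_3$ is a connected bipartite graph, Theorem \ref{s2} says this NEPS is connected precisely when $r(\Omega)=n$. Thus it suffices to produce, for every $n\geq 2$ and every odd $k$ with $1\leq k<n$, a set $\Omega\subseteq\Zl_2^n$ such that (i) every element of $\Omega$ has weight exactly $k$, (ii) the row span of $\Omega$ is all of $\Zl_2^n$, and (iii) the sum of the elements of $\Omega$ is nonzero.

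The heart of the matter is a single observation: for $1\leq k<n$ with $k$ odd, the set $W_k$ of all weight-$k$ vectors in $\Zl_2^n$ spans $\Zl_2^n$. Indeed, the span $S$ of $W_k$ is invariant under coordinate permutations; since $k<n$, both $e_1+\cdots+e_k$ and $e_2+\cdots+e_{k+1}$ lie in $W_k$, so their sum $e_1+e_{k+1}$ lies in $S$. Hence $S$ contains a vector of weight $2$, and therefore, by permutation invariance, every vector $e_i+e_j$, so $S$ contains the even-weight subspace, which has codimension $1$. But $e_1+\cdots+e_k$ has odd weight because $k$ is odd, so $S$ is not contained in the even-weight subspace; therefore $S=\Zl_2^n$. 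This is exactly where oddness of $k$ is essential: if $k$ were even, every weight-$k$ vector would lie in the even-weight subspace, so no admissible $\Omega$ could have rank $n$ and the NEPS could not be connected.

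With this in hand I would finish as follows. Choose a basis $\left\lbrace\beta^{(1)},\ldots,\beta^{(n)}\right\rbrace$ of $\Zl_2^n$ consisting of weight-$k$ vectors, which exists because $W_k$ spans. If $\sum_{i}\beta^{(i)}\neq\o$, take $\Omega=\left\lbrace\beta^{(1)},\ldots,\beta^{(n)}\right\rbrace$ and we are done. Otherwise, this forces $2\leq k\leq n-2$: for $k=1$ the only such basis is $\left\lbrace e_1,\ldots,e_n\right\rbrace$, with sum $(1,\ldots,1)\neq\o$, and for $k=n-1$ (which occurs only when $n$ is even) the only such basis is $W_{n-1}$ itself, whose sum is $(n-1)(1,\ldots,1)\neq\o$. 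In the remaining range $\binom{n}{k}>n$, so we may pick $\gamma\in W_k$ not among the $\beta^{(i)}$ and set $\Omega=\left\lbrace\beta^{(1)},\ldots,\beta^{(n)},\gamma\right\rbrace$, which still has rank $n$ and now has sum $\gamma\neq\o$.

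The construction above is non-explicit; a concrete alternative is available too: start with the $n$ cyclic windows $\beta^{(i)}$, the indicator of $\{i,i+1,\ldots,i+k-1\}$ modulo $n$, whose sum is $k(1,\ldots,1)=(1,\ldots,1)$, and, writing $d=\gcd(n,k)$ (necessarily odd, since $d\mid k$ and $k$ is odd), append the $d-1$ vectors $\delta_c$ given by the indicator of $\{1,\ldots,k-1\}\cup\{k+c\}$ for $c=1,\ldots,d-1$ (note $k+c\leq n$ because $n\geq k+d$); tracking parities in the residue classes modulo $d$ shows the resulting set has rank $n$ and nonzero sum. In either version the only genuine work is the rank count — verifying that weight-$k$ vectors really span $\Zl_2^n$ — together with the small bookkeeping needed to rule out $\sum_{\beta\in\Omega}\beta=\o$; everything else is immediate from Theorem \ref{f7} and Theorem \ref{s2}.
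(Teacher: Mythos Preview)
Your argument is correct and takes a genuinely different route from the paper. The paper proceeds by induction on $n$: at the step from $l$ to $l+1$ it augments an $l\times l$ full-rank matrix $\Omega$ (all rows of weight $k$) to
\[
\Omega'=\left(\begin{array}{cc}\Omega & \o\\ \delta & 1\end{array}\right),
\]
handling the boundary case $k=l$ (which forces $l$ odd, hence $l+1$ even) separately via $\Omega'=J-I$. Throughout, the paper keeps $|\Omega|=n$, so linear independence of the rows immediately gives $\sum_{\beta\in\Omega}\beta\neq\o$. You instead isolate a single clean lemma---the weight-$k$ vectors span $\Zl_2^n$ whenever $k$ is odd and $k<n$---and extract a basis from $W_k$.

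One remark: your ``otherwise'' branch is vacuous. Since $\{\beta^{(1)},\ldots,\beta^{(n)}\}$ is a basis of $\Zl_2^n$, these vectors are linearly independent over $\Zl_2$, so no nontrivial $\Zl_2$-combination of them vanishes; in particular $\sum_i\beta^{(i)}\neq\o$ automatically. This is precisely the observation the paper uses (``$r(\Omega)=n$ will imply $\sum_{\beta\in\Omega}\beta\neq\o$'', relying on $|\Omega|=n$). So the extra vector $\gamma$ and the edge-case discussion of $k=1$ and $k=n-1$ can be deleted without loss.

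Trade-offs: the paper's inductive construction is fully explicit and hands you a concrete $\Omega$ for each pair $(n,k)$, at the price of a mildly awkward case split. Your spanning argument is shorter and explains conceptually why oddness of $k$ is exactly the right hypothesis (even $k$ would trap $\Omega$ in the even-weight hyperplane), though the basis it produces is not written down; your cyclic-window alternative compensates for this, but verifying its rank still requires the residue-class bookkeeping you allude to.
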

\begin{proof}
Let $n\in \Nl\setminus \left\lbrace 1\right\rbrace$ and $k<n$ be an odd positive integer. It is enough to show that there is a matrix $\Omega$ of size $n$ over $\Zl_{2}$ such that each of its rows have exactly $k$ non-zero entries with $r(\Omega)=n$. Note that $r(\Omega)=n$ will imply $\sum\limits_{\beta\in\Omega}\beta\neq \mathbf{0}$. Then Theorem \ref{f7} will imply that the NEPS of $P_{3}$ corresponding to $\Omega$ has perfect state transfer at time $\tau_{k}=\frac{\pi}{(\sqrt{2})^{k}}$. 
\par We prove  this by induction on $n$. For the initial case $n=2$, the only possibility for $k$ is $1$. In this case 
\[\Omega= \left(\begin{array}{rr}
           1 & 0\\
           0 & 1 \end{array}\right)\] serves our purpose.
Assume that for $n=l$ and any odd positive integer $k<l$, there is such an $\Omega$. Now consider $n=l+1$. We consider the following cases.\\
\textbf{Case I.(l is even):} Let $k< l+1$ be odd so that $k<l$ as well. So, by induction hypothesis, there is a matrix $\Omega$ of size $l$ over $\Zl_{2}$ such that each of its rows have exactly $k$ non-zero entries with $r(\Omega)=l$. Now consider the block matrix\[\Omega'= \left(\begin{array}{rr}
           \Omega & \textbf{0}\\
           \delta & 1 \end{array}\right),\]where $\delta$ is any $l$-tuple with $k-1$ non-zero entries. Clearly each rows of $\Omega'$ contains exactly $k$ nonzero entries with $r(\Omega')=l+1$.\\
\textbf{Case II.(l is odd):} Let $k< l+1$ be odd so that either $k<l$ or $k=l$. If $k< l$ we use the method in the previous case to find such an $\Omega'$. For $k=l$, consider $\Omega'=J-I$, where $J$ is the all one matrix of order $l+1$ and $I$ is the identity matrix of same order. As $l+1$ is even, $\Omega'^{T}\Omega'=(J-I)^{T}(J-I)=(J-I)^{2}=(l+1)J-2J+I=I$ in $\Zl_{2}$ and this implies $r(\Omega')=l+1$ in $\Zl_{2}$. This completes the proof.
\end{proof}

In this section we have developed a sufficient condition for a NEPS of $P_{3}$ to be connected and exhibit perfect state transfer. We provide that condition as a theorem as follows.

\begin{theorem}[\textbf{Sufficient Condition}]\label{as} Let $\Omega$ be a set of $n$-tuples such that $r(\Omega)=n$ and also let $s(\beta)$ be even (or odd) for all $\beta\in\Omega$. Assume that $k=\min\limits_{\beta\in\Omega} s(\beta)$ and $\Omega^{*}=\left\lbrace\beta\in\Omega : s(\beta)=k \right\rbrace $. If $\sum\limits_{\beta\in\Omega^{*}}\beta\neq \mathbf{0}$ then $NEPS\left(P_{3},\ldots,P_{3};\Omega\right)$ allows perfect state transfer at time $\tau_{k}=\frac{\pi}{(\sqrt{2})^{k}}$.
\end{theorem}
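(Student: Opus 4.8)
The plan is to obtain Theorem~\ref{as} as a straightforward amalgamation of three results already in hand: Theorem~\ref{f8} (which collapses the transition matrix of a parity-homogeneous NEPS to that of its minimal-weight layer), Theorem~\ref{f7} (the sufficient condition for perfect state transfer on a NEPS all of whose basis tuples have the same weight), and Theorem~\ref{s2}, Stevanovi\'c's connectedness criterion for NEPS of connected bipartite graphs. No new combinatorial or spectral work is needed; the proof is essentially bookkeeping to check that the hypotheses of each cited result are exactly the ones assumed here.

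First I would treat the perfect state transfer. Since $s(\beta)$ has a fixed parity over all $\beta\in\Omega$, $k=\min_{\beta\in\Omega}s(\beta)$, and $\Omega^{*}=\{\beta\in\Omega:s(\beta)=k\}$, Theorem~\ref{f8} applies verbatim and yields $H_{\Omega}(\tau_{k})=H_{\Omega^{*}}(\tau_{k})$. Next, $\Omega^{*}$ satisfies the hypothesis of Theorem~\ref{f7}, because every $\beta\in\Omega^{*}$ has $s(\beta)=k$ fixed; and since $\sum_{\beta\in\Omega^{*}}\beta\neq\mathbf{0}$ in $\Zl_{2}^{n}$, there is at least one coordinate $j$ with $\sum_{\beta\in\Omega^{*}}\beta_{j}\neq 0$, i.e.\ an odd number of tuples in $\Omega^{*}$ carry a $1$ in position $j$. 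Theorem~\ref{f7}(I) then gives that $NEPS(P_{3},\ldots,P_{3};\Omega^{*})$ exhibits perfect state transfer between $U_{j}$ and $V_{j}$ at time $\tau_{k}$, which is to say the $U_{j}V_{j}$-entry of $H_{\Omega^{*}}(\tau_{k})$ has unit modulus. Because $NEPS(P_{3},\ldots,P_{3};\Omega)$ and $NEPS(P_{3},\ldots,P_{3};\Omega^{*})$ share the common vertex set $V(P_{3})\times\cdots\times V(P_{3})$, the symbols $U_{j}$ and $V_{j}$ denote the same pair of vertices in both graphs, so by $H_{\Omega}(\tau_{k})=H_{\Omega^{*}}(\tau_{k})$ the $U_{j}V_{j}$-entry of $H_{\Omega}(\tau_{k})$ also has unit modulus. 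Hence $NEPS(P_{3},\ldots,P_{3};\Omega)$ admits perfect state transfer between $U_{j}$ and $V_{j}$ at time $\tau_{k}$.

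Finally I would record connectedness, so that the perfect state transfer is asserted on an honest connected graph: $P_{3}$ is connected and bipartite, so applying Theorem~\ref{s2} with $\mathscr{B}=\Omega$ and using the assumption $r(\Omega)=n$ shows that $NEPS(P_{3},\ldots,P_{3};\Omega)$ is connected. Combining this with the previous paragraph completes the proof. The only points that genuinely require care---and they are mild---are verifying that the parity-and-minimality hypotheses under which Theorem~\ref{f8} was proved are precisely those assumed in Theorem~\ref{as}, and observing that the pair of vertices between which Theorem~\ref{f7} produces perfect state transfer for $\Omega^{*}$ is literally the same pair in the graph for $\Omega$, so that the equality of transition matrices transports the property directly. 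I do not anticipate any substantive obstacle beyond this alignment of hypotheses.
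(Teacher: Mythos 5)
Your proposal is correct and is essentially the argument the paper intends: the paper leaves Theorem~\ref{as} without a formal proof, presenting it as the combination of Theorem~\ref{f8} (collapsing $H_{\Omega}(\tau_{k})$ to $H_{\Omega^{*}}(\tau_{k})$), Theorem~\ref{f7}(I) applied to $\Omega^{*}$, and Theorem~\ref{s2} for connectedness via $r(\Omega)=n$ --- exactly the three steps you carry out, including the observation in the paper's remark after Theorem~\ref{f8} that the perfect state transfer pair transfers verbatim from $\Omega^{*}$ to $\Omega$.
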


\section{Some more graphs allowing perfect state transfer}
In the previous section we have constructed several, in fact infinitely many, NEPS of $P_{3}$ allowing perfect state transfer. Using those NEPS we construct some more graphs admitting perfect state transfer.
\begin{theorem}\label{f9}
Let the graph $NEPS\left(P_{3},\ldots,P_{3};\Omega\right)$ satisfies the conditions of Theorem \ref{as}. Also let $G$ be a graph and $r\in\Rl$ be such that $\frac{\lambda}{r}$ is an odd integer for every eigenvalue $\lambda$ of $G$. Then the graph $NEPS\left(P_{3},\ldots,P_{3};\Omega\right)\times G$ admits perfect state transfer at time $\frac{\tau_{k}}{r}$.
\end{theorem}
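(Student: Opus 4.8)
The plan is to evaluate the transition matrix of $NEPS(P_{3},\ldots,P_{3};\Omega)\times G$ at time $\tau_{k}/r$ directly from Proposition \ref{aa}, show that it factors as $H_{\Omega}(\tau_{k})\otimes I$, and then read off perfect state transfer from the perfect state transfer of $NEPS(P_{3},\ldots,P_{3};\Omega)$ supplied by Theorem \ref{as}. Concretely, fix the spectral decomposition $B=\sum_{s}\mu_{s}F_{s}$ of the adjacency matrix $B$ of $G$. By Proposition \ref{aa} the transition matrix of the Kronecker product is $\sum_{s}H_{\Omega}(\mu_{s}t)\otimes F_{s}$, so at $t=\tau_{k}/r$ it equals $\sum_{s}H_{\Omega}(\mu_{s}\tau_{k}/r)\otimes F_{s}$. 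The hypothesis on $G$ says each $\mu_{s}/r$ is an odd integer, say $\mu_{s}/r=2n_{s}+1$ with $n_{s}\in\Zl$, whence $\mu_{s}\tau_{k}/r=2n_{s}\tau_{k}+\tau_{k}$.

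The crux is to show that $H_{\Omega}(2\tau_{k})=I$ \emph{exactly}. For each $\beta\in\Omega$, Lemma \ref{b1} gives $H_{\beta}(-\tau_{s(\beta)})=H_{\beta}(\tau_{s(\beta)})$; since $H_{\beta}(-t)=H_{\beta}(t)^{-1}$, this forces $H_{\beta}(\tau_{s(\beta)})^{2}=I$. If $\beta\in\Omega^{*}$ then $s(\beta)=k$, so $H_{\beta}(2\tau_{k})=H_{\beta}(\tau_{k})^{2}=I$. If $\beta\in\Omega\setminus\Omega^{*}$ then the parity hypothesis of Theorem \ref{as} makes $s(\beta)-k$ a positive even integer, so $\tau_{k}=2^{(s(\beta)-k)/2}\tau_{s(\beta)}$ is an even integer multiple of $\tau_{s(\beta)}$; exactly as in the proof of Theorem \ref{f8} this already yields $H_{\beta}(\tau_{k})=I$, and hence $H_{\beta}(2\tau_{k})=I$ too. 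Multiplying over $\beta\in\Omega$ and invoking Proposition \ref{a1} gives $H_{\Omega}(2\tau_{k})=\prod_{\beta\in\Omega}H_{\beta}(2\tau_{k})=I$. Since $H_{\Omega}$ is a one-parameter group, $H_{\Omega}(2n_{s}\tau_{k})=H_{\Omega}(2\tau_{k})^{n_{s}}=I$ for every $n_{s}\in\Zl$ (negative $n_{s}$ included), so $H_{\Omega}(\mu_{s}\tau_{k}/r)=H_{\Omega}(2n_{s}\tau_{k})H_{\Omega}(\tau_{k})=H_{\Omega}(\tau_{k})$ for every $s$.

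Substituting back, the transition matrix of $NEPS(P_{3},\ldots,P_{3};\Omega)\times G$ at time $\tau_{k}/r$ is $\sum_{s}H_{\Omega}(\tau_{k})\otimes F_{s}=H_{\Omega}(\tau_{k})\otimes\big(\sum_{s}F_{s}\big)=H_{\Omega}(\tau_{k})\otimes I$. By Theorem \ref{as} the factor $NEPS(P_{3},\ldots,P_{3};\Omega)$ admits perfect state transfer at time $\tau_{k}$, i.e. $H_{\Omega}(\tau_{k})$ has an entry of unit modulus, say between the vertices $u$ and $v$; then the entry of $H_{\Omega}(\tau_{k})\otimes I$ indexed by $(u,w)$ and $(v,w)$, for any fixed vertex $w$ of $G$, also has unit modulus, so the product admits perfect state transfer at time $\tau_{k}/r$. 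I do not anticipate a genuine obstacle here; the two points that need care are that the period of the NEPS is \emph{exactly} $2\tau_{k}$ and not merely up to a sign, which is precisely what the identity $H_{\beta}(-\tau_{s(\beta)})=H_{\beta}(\tau_{s(\beta)})$ of Lemma \ref{b1} provides, and the bookkeeping for possibly negative odd integers $\mu_{s}/r$, which is harmless since $H_{\Omega}$ is a group.
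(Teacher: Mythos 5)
Your proposal is correct and follows essentially the same route as the paper: both arguments use Lemma \ref{b1} to get $H_{\beta}(\tau_{s(\beta)})^{2}=I$ (the paper phrases this as $(H_{\Omega^{*}}(\tau_{k}))^{2}=I$ via Theorem \ref{f8}, you phrase it as $H_{\Omega}(2\tau_{k})=I$ factor by factor), then reduce $H_{\Omega}$ at any odd multiple of $\tau_{k}$ to $H_{\Omega}(\tau_{k})$, and finally apply Proposition \ref{aa} to collapse the product's transition matrix at $\tau_{k}/r$ to $H_{\Omega}(\tau_{k})\otimes I$. The two points you flag for care (the exact period $2\tau_{k}$ and negative odd eigenvalue ratios) are exactly the points the paper's proof also relies on.
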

\begin{proof}
Let $NEPS\left(P_{3},\ldots,P_{3};\Omega\right)$ satisfies the conditions of Theorem \ref{as}. By Theorem \ref{f8} we have $H_{\Omega}(\tau_{k})=H_{\Omega^{*}}(\tau_{k})$, where $H_{\Omega}(t)$ and $H_{\Omega^{*}}(t)$ are the transition matrices for the NEPS of $P_{3}$ corresponding to $\Omega$ and $\Omega^{*}$, respectively. By Lemma \ref{b1}, we have $H_{\beta}(-\tau_{k})=H_{\beta}(\tau_{k})$ for each $\beta\in\Omega^{*}$. Therefore Proposition \ref{a1} implies that $H_{\Omega^{*}}(-\tau_{k})=H_{\Omega^{*}}(\tau_{k})$, \emph{i.e.}, $\left(H_{\Omega^{*}}(\tau_{k})\right)^{2}=I$. For every integer $m$,
\begin{eqnarray*}
H_{\Omega}((2m+1)\tau_{k})=\left(H_{\Omega}(\tau_{k})\right)^{2m+1}
=\left(H_{\Omega^{*}}(\tau_{k})\right)^{2m+1}
=H_{\Omega^{*}}(\tau_{k})
=H_{\Omega}(\tau_{k}).
\end{eqnarray*}
Let $B=\sum\limits_{s=1}^{l}\lambda_{s}F_{s}$ be the spectral decomposition for the adjacency matrix of $G$. By Proposition \ref{aa}, the transition matrix for $NEPS\left(P_{3},\ldots,P_{3};\Omega\right)\times G$ at time $\frac{\tau_{k}}{r}$ is \[H\left(\frac{\tau_{k}}{r}\right)=\sum\limits_{s=1}^{l} H_{\Omega}\left(\frac{\lambda_{s}}{r}\tau_{k}\right)\otimes F_{s}=\sum\limits_{s=1}^{l} H_{\Omega}(\tau_{k})\otimes F_{s}=H_{\Omega}(\tau_{k})\otimes \sum\limits_{s=1}^{l} F_{s}=H_{\Omega}(\tau_{k})\otimes I.\] Now $NEPS\left(P_{3},\ldots,P_{3};\Omega\right)$ allows perfect state transfer at time $\tau_{k}$ and hence $NEPS\left(P_{3},\ldots,P_{3};\Omega\right)\times G$ exhibits perfect state transfer at time $\frac{\tau_{k}}{r}$.
\end{proof}
Note that if the graph $G$ is non-bipartite and connected then $NEPS\left(P_{3},\ldots,P_{3};\Omega\right)\times G$ is also connected. This follows by Lemma \ref{s1} and Theorem \ref{s2}. We now consider the following example.
\begin{ex}
The complete graph $K_{m}$ has the eigenvalues $-1,\ldots, -1, m-1$. If $m$ is even then all the eigenvalues are odd. Also let $J$ be the all one matrix of order $n$ and $I$ the identity matrix of same order. If $n$ is even then the graph $NEPS\left(P_{3},\ldots,P_{3};J-I\right)$ is connected (as $r(J-I)=n$ in $\Zl_{2}$) and allows perfect state transfer at time $\frac{\pi}{(\sqrt{2})^{n-1}}$ (by Theorem \ref{as}). Consequently, by Theorem \ref{f9}, the graph $NEPS\left(P_{3},\ldots,P_{3};J-I\right)\times K_{m}$ allows perfect state transfer at time $\frac{\pi}{(\sqrt{2})^{n-1}}$. Note that the graph $NEPS\left(P_{3},\ldots,P_{3};J-I\right)\times K_{m}$ is also connected when $m\geq 3$.
\end{ex}

\section*{Conclusion}
Perfect state transfer on quantum networks modeled by NEPS of $P_{3}$ is considered where the adjacency matrix is assumed to be the hamiltonian of the quantum system. We have developed a method to find perfect state transfer in NEPS of $P_{3}$. Also we have shown that NEPS of $P_{3}$ with basis $\Omega$ exhibits perfect state transfer whenever the following holds:
\begin{itemize}
\item For every tuple in $\Omega$, the number of nonzero entries in $\beta$ \emph{i.e. $s(\beta)$} is either even or odd.
\item $\sum\limits_{\beta\in\Omega^{*}}\beta\neq\mathbf{0}$ in $\Zl_{2}$ where $\Omega^{*}=\left\lbrace\beta\in\Omega : s(\beta)=k \right\rbrace $ and  $k=\min\limits_{\beta\in\Omega} s(\beta)$. 
\end{itemize}
In particular, we have seen that the Cartesian product of $P_{3}$ exhibits perfect state transfer which has already been shown by Christandl et al. in \cite{chr1,chr2}. But not only the Cartesian product, there are several other, in fact infinitely many, NEPS of $P_{3}$ allowing perfect state transfer. We also found that for every $n\in \Nl\setminus \left\lbrace 1\right\rbrace$ and any odd positive integer $k< n$, there is a basis $\Omega$ such that $NEPS(P_{3},\ldots, P_{3};\Omega)$ is connected and exhibits perfect state transfer at time $\tau_{k}=\frac{\pi}{(\sqrt{2})^{k}}$. Finally, we have constructed several other graphs out of the NEPS of $P_{3}$ exhibiting perfect state transfer. It will be interesting to find whether the conditions of the Theorem \ref{as} are also necessary.

\section*{Acknowledgments} We sincerely thank the anonymous referee for useful comments on an earlier manuscript.

\end{document}